\newtheorem {theorem} {Theorem}[section]
\newtheorem {lemma}  [theorem]{Lemma}
\numberwithin{equation}{section}
\theoremstyle{definition}
\newtheorem{df}{Definition}
\theoremstyle{remark}
\newtheorem{rem}{Remark}
\newcommand{\rbmo}{\mathrm{RBMO}}
\newcommand{\krl}{\mathcal{K}}
\newcommand{\KK}{K}
\newcommand{\dist}{d}
\newcommand{\rd}{\mathbb{R}^m}
\newcommand{\al}{\alpha}
\newcommand{\de}{\delta}
\newcommand{\er}{\varepsilon}
\newcommand{\Rbb}{\mathbb R}
\begin{document}

\title{Calder\'on--Zygmund operators on RBMO}



\author{Evgueni Doubtsov}

\address{St.~Petersburg Department
of Steklov Mathematical Institute, Fontanka 27, St.~Petersburg
191023, Russia}

\email{dubtsov@pdmi.ras.ru}

\author{Andrei V.~Vasin }

\address{St.~Petersburg Department
of Steklov Mathematical Institute, Fontanka 27, St.~Petersburg
191023, Russia}

\email{andrejvasin@gmail.com}

\thanks{This research was supported by the Russian Science Foundation (grant No.~18-11-00053).}

\subjclass[2020]{Primary 42B20; Secondary 42B35}

\keywords{Calder\'{o}n--Zygmund operator, non-doubling measure, regular BMO space}

\begin{abstract}
Let $\mu$ be an $n$-dimensional finite positive measure on $\rd$.
We obtain a $T1$ condition sufficient for the boundedness of
Calder\'{o}n--Zygmund operators on $\textrm{RBMO}(\mu)$, the regular BMO space of Tolsa.
\end{abstract}

\maketitle

\section{Introduction}\label{s_int}

Given a positive {Radon} measure $\mu$ on $\rd$,
Tolsa \cite{T} introduced $\rbmo(\mu)$, the regular BMO space with respect to $\mu$.
This space is suitable for the non-doubling measures $\mu$ and
it has genuine properties of the classical space BMO
such as the John--Nirenberg inequality.
Moreover, it is proved in \cite{T} that a bounded on $L^2(\mu)$ Calder\'on--Zygmund operator
maps $L^\infty(\mu)$ into $\rbmo(\mu)$.
Motivated by this result, we consider Calder\'on--Zygmund operators on $\rbmo(\mu)$:
we obtain a T1 condition sufficient for the boundedness of Calder\'on--Zygmund operators
on $\rbmo(\mu)$; see Section~\ref{ss_mthm} for a precise formulation.

\subsection{Cubes and $n$-dimensional measures}
In what follows, a cube is a closed cube in $\rd$ with sides parallel to the axes and centered at
a point of $\mathrm{supp\,} \mu$. For a cube $Q$, let $\ell=\ell(Q)$ denote its side-length.
Also the notation $Q(x, \ell)$ is used to indicate explicitly the center $x$ of the cube under consideration.

As in \cite{T}, we always assume that $\mu$ is an $n$-dimensional measure on $\rd$ for a real number $n$,
$0< n \le m$.
By definition, it means that
\begin{equation}  \label{e_ndim}
\mu(Q) \le C \ell^n(Q)\ \textrm{for any cube\ } Q\subset\rd,\ \ell(Q)>0,
 \end{equation}
with a universal constant $C>0$.

\subsection{Calder\'{o}n--Zygmund operators}
Let $\dist(\cdot, \cdot)$ denote the standard distance between points of $\rd$.
A Calder\'{o}n--Zygmund kernel associated with an $n$-dimensional measure $\mu$ on $\rd$ is a measurable function
$\krl (x, y)$ on $\rd\times \rd\setminus\{(x, x): x\in \rd\}$
satisfying the following conditions:
\begin{equation}\label{e_cz1}
|\krl(x, y)|
\le C \dist^{-n}(x,y),
\end{equation}
\begin{equation}\label{e_cz3}
\aligned
 |\krl(x_1, y)- \krl(x_2, y)| + |\krl(y, x_1)
&- \krl(y, x_2)| \\
&\le C\frac{\dist^\delta(x_1, x_2)}{\dist^{n+\delta}(x_1, y)},\quad 2\dist(x_1, x_2)\le \dist(x_1, y),
\endaligned
\end{equation}
and
\begin{equation}\label{e_cz2}
\left|\int_{Q(x, R)\setminus Q(x, r)} \krl(x, y)\, d\mu(x) \right|
\le C, \quad 0<r<R,
\end{equation}
where $C>0$ is a universal constant and
$\delta$, $0 < \delta\le 1$, is a regularity constant
specific to the kernel $\krl$.

\begin{rem}
Restrictions \eqref{e_cz1} and \eqref{e_cz3} are standard.
Condition \eqref{e_cz2} is a more special cancellation property.
\end{rem}

The Calder\'{o}n-Zygmund operator associated to the kernel $\krl (x, y)$  and the measure  $\mu$ is defined as
\[Tf(x)=\int_{\rd} \krl (x, y)f(y)\, d\mu(y) \]
for $x\notin \textrm{supp}(f\mu)$.
So, in the general setting, one introduces the following truncated operators $T_\er$, $\er>0$:
\[T_\er f(x)=\int_{\rd\setminus Q(x, \er)}\krl (x, y)f(y)\, d\mu(y).
 \]
The operator $T$ is said to be bounded on $L^p(\mu)$ if the operators $T_\er$ are bounded on $L^p(\mu)$ uniformly in $\er >0$.

\subsection{Regular BMO space}
In this section, we give an equivalent definition of $\rbmo(\mu)$, the regular BMO space
introduced by Tolsa \cite{T}.

\subsubsection{Coefficients $\KK(Q, R)$}
Given two cubes $Q\subset R$ in $\rd$, put
\[
\KK(Q, R) = 1 + \sum_{j=1}^{N_{Q, R}} \frac{\mu(2^j Q)}{\ell^n(2^j Q)},
\]
where $N_{Q, R}$ is the minimal integer $k$ such that $\ell(2^k Q) \ge \ell(R)$.
Clearly, $\KK(Q, R)\ge 1$. On the other hand, $\KK(Q, R)$ is bounded
above by $C \log(\ell(R)/ \ell(Q))$ because $\mu$ is $n$-dimensional.

\subsubsection{Doubling cubes}
\begin{df}
Let $\al>1$ and $\beta > \al^n$. A cube $Q$ is called $(\al, \beta)$-doubling if
\[
\mu(\al Q) < \beta \mu(Q).
\]
\end{df}

Let $\mu$ be a Radon measure on $\rd$ and $\al>1$.
As indicated in \cite{T}, it is known that
for a sufficiently large $\beta = \beta (\al, n)$, for $\mu$-almost all $x\in \rd$ there is
a sequence of $(\alpha, \beta)$-doubling cubes $\{Q_k\}_{k=1}^\infty$ centered at $x$ and with $\ell(Q_k)$ tending
to $0$ as $k\to \infty$.
Let $\beta_n$ denote two times the infimum of the corresponding constants $\beta(4, n)$.

\begin{df}\label{d_doubling}
A cube $Q\subset\rd$ is called doubling if $Q$ is $(4, \beta_n)$-doubling.
\end{df}

\begin{rem}\label{r_alpha}
The original definition of a doubling cube
and further results in \cite{T} are given for $\alpha=2$ and under assumption $1\le n \le m$.
Nevertheless, it is known that the results of Tolsa \cite{T}
are extendable to larger values of $\alpha$ and for $0< n \le m$;
see, for example, \cite{HYY12} for a generalization
of this theory for a wide class of measures on appropriate  metric spaces.
So, in what follows, we use the above definition with
$\alpha=4$ and still refer to original results of Tolsa \cite{T}.
\end{rem}

\subsubsection{Definition of $\rbmo$}
\begin{df}\label{d_rbmo}
The space $\rbmo(\mu)$ consists of those $f\in L^1_{loc}(\mu)$ for which there exists a constant $C_{\mathfrak{E}}>0$
and a collection of constants $\{f_Q\}$ (one constant for each doubling cube $Q\subset \rd$)
such that
\begin{equation}\label{e_df_osc}
\frac{1}{\mu(Q)}
\int_Q |f - f_Q|\, d\mu
\le C_{\mathfrak{E}}
\end{equation}
and
\begin{equation}\label{e_df_K}
|f_Q - f_R|\le C_{\mathfrak{E}} \KK(Q, R)
\end{equation}
for all doubling cubes $Q$, $R$, $Q\subset R$.
Let $\|f\|= \|f\|_{\mathfrak{E}}$ denote the infimum of the corresponding constants $C_{\mathfrak{E}}>0$.
\end{df}

Standard arguments guarantee that $\|\cdot\|$ is a norm on the space $\rbmo(\mu)$ modulo constants.

\subsection{Main theorem}\label{ss_mthm}
Suppose that a Calder\'{o}n-Zygmund operator $T$ is bounded on $L^2(\mu)$.
Then, as mentioned above, $T$ maps $L^\infty(\mu)$ boundedly into $\rbmo(\mu)$.
Moreover,
in the classical situation of homogeneous metric spaces and under additional assumption $T1 =0$,
the operator $T$ is known to be bounded on
$\mathrm{BMO}$ type spaces;
see, for example, \cite[Ch.~4, Sect.~4]{KK}.
In the present paper, we obtain a $T1$ condition sufficient for the boundedness of $T$ on $\rbmo(\mu)$.

Given a cube $Q\subset \rd$, put
\[
K(Q)=K(Q,2^k Q),
\]
where $k$ is the smallest positive integer such that $\mu(2^k Q)>\frac{1}{2}\mu(\rd)$.

\begin{theorem}\label{t_main}
Let $\mu$ be a finite positive $n$-dimensional measure on $\rd$.
Let $T$ be a Calder\'on--Zygmund operator bounded on $L^2(\mu)$.
Assume that for each doubling cube $Q\subset \rd$, there exists a constant $b_Q$ such that
\begin{equation}\label{e_main_osc}
\frac{1}{\mu(Q)}
\int_Q |T1 - b_Q|\, d\mu
\le \frac{C}{\KK(Q)}\quad\textrm{for all doubling cubes}\ Q
\end{equation}
and
\begin{equation}\label{e_main_K}
|b_Q - b_R|\le C \frac{\KK(Q, R)}{\KK(Q)}\quad\textrm{for all doubling cubes}\ Q,R,\ Q\subset R,
\end{equation}
where the constant $C>0$ does not depend on $Q$ and $R$.
Then $T$ is bounded on $\rbmo(\mu)$.
\end{theorem}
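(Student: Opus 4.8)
The plan is to fix $f\in\rbmo(\mu)$ with $\|f\|\le 1$, equipped with constants $\{f_Q\}$ realizing Definition~\ref{d_rbmo}, and to produce constants $\{(Tf)_Q\}$ exhibiting $Tf\in\rbmo(\mu)$ with $\|Tf\|\lesssim 1$. Two preliminary reductions drive everything. First, the constants $f_Q$ are uniformly bounded along any increasing sequence of doubling cubes exhausting $\rd$ (their mutual differences are controlled by $\KK$-coefficients that tend to $1$), so after subtracting an additive constant from $f$ we may normalize it so that
\[
|f_Q|\lesssim \KK(Q)\qquad\text{for every doubling cube }Q;
\]
this uses the elementary inequality $\KK(Q,R')\lesssim\KK(Q)$ valid for every doubling $R'$ carrying more than half the mass of $\mu$ — exactly the auxiliary cube entering the definition of $\KK(Q)$. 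Second, the hypotheses \eqref{e_main_osc}--\eqref{e_main_K} force $\sup_Q|b_Q|<\infty$: indeed $|b_Q-b_R|\le C\,\KK(Q,R)/\KK(Q)$ telescopes to an absolute constant along a doubling chain ending at a fixed large cube $Q_0$, while $|b_{Q_0}|\le \tfrac1{\mu(Q_0)}\int_{Q_0}|T\mathbf{1}|\,d\mu+C<\infty$ because $T\mathbf{1}\in L^2(\mu)$. The point of these two facts is that the $\KK(Q)^{-1}$ gain in the hypothesis exactly compensates the growth $|f_Q|\lesssim\KK(Q)$.

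For a doubling cube $Q$ write $\mathbf{1}=\mathbf{1}_{2Q}+\mathbf{1}_{\rd\setminus 2Q}$ (the dilation factor $2$ should be enlarged, depending on $m$, wherever the smoothness \eqref{e_cz3} is invoked) and, with $m_Q h:=\tfrac1{\mu(Q)}\int_Q h\,d\mu$, set
\[
(Tf)_Q:=b_Qf_Q+m_Q\!\Big(T\big((f-f_Q)\mathbf{1}_{\rd\setminus 2Q}\big)\Big),\qquad
Tf=f_Q\,T\mathbf{1}+T\big((f-f_Q)\mathbf{1}_{2Q}\big)+T\big((f-f_Q)\mathbf{1}_{\rd\setminus 2Q}\big).
\]
The oscillation bound $m_Q|Tf-(Tf)_Q|\lesssim 1$ splits into three pieces: (i) the $T\mathbf{1}$–piece equals $|f_Q|\,m_Q|T\mathbf{1}-b_Q|\le|f_Q|\,C/\KK(Q)\lesssim 1$ by \eqref{e_main_osc}; (ii) the local piece $m_Q|T((f-f_Q)\mathbf{1}_{2Q})|$ is handled by Cauchy--Schwarz, the $L^2(\mu)$–boundedness of $T$, the doubling of $Q$, and the John--Nirenberg inequality for $\rbmo(\mu)$ in its form with the cubes $\tfrac32 Q$, which yields $\int_{2Q}|f-f_Q|^2\,d\mu\lesssim\mu(Q)$ even though $2Q$ need not be doubling; (iii) the tail piece follows from the usual annular decomposition via \eqref{e_cz1}, \eqref{e_cz3}, the estimate $\ell^{-n}(2^jQ)\int_{2^jQ}|f-f_Q|\,d\mu\lesssim j+1$, and summability of $2^{-j\de}(j+1)$.

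The heart of the matter is the regularity estimate $|(Tf)_Q-(Tf)_R|\lesssim\KK(Q,R)$ for doubling $Q\subset R$. From $(f-f_Q)\mathbf{1}_{\rd\setminus 2Q}=(f-f_Q)\mathbf{1}_{2R\setminus 2Q}+(f-f_R)\mathbf{1}_{\rd\setminus 2R}+(f_R-f_Q)\mathbf{1}_{\rd\setminus 2R}$ one gets
\[
(Tf)_Q-(Tf)_R=\big(b_Qf_Q-b_Rf_R\big)+m_Q\!\big(T((f-f_Q)\mathbf{1}_{2R\setminus 2Q})\big)+\big(m_Q-m_R\big)\!\big(T((f-f_R)\mathbf{1}_{\rd\setminus 2R})\big)+(f_R-f_Q)\,m_Q\!\big(T\mathbf{1}_{\rd\setminus 2R}\big).
\]
The first summand is $f_Q(b_Q-b_R)+b_R(f_Q-f_R)$, and $|f_Q|\,|b_Q-b_R|\lesssim\KK(Q)\cdot\KK(Q,R)/\KK(Q)=\KK(Q,R)$ by \eqref{e_main_K}, $|b_R|\,|f_Q-f_R|\lesssim\KK(Q,R)$. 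The third summand is the oscillation over $R$ of $T$ applied to a function supported off $2R$, hence $\lesssim 1$ by the annular estimate. In the last summand one writes $m_Q(T\mathbf{1}_{\rd\setminus 2R})=b_Q+O(1)$ — legitimate since $m_Q(T\mathbf{1})=b_Q+O(\KK(Q)^{-1})$, $m_Q(T\mathbf{1}_{2Q})=O(1)$ by $L^2(\mu)$–boundedness and doubling, and $|m_Q(T\mathbf{1}_{2R\setminus 2Q})|\lesssim 1$ by the cancellation \eqref{e_cz2} after correcting for the possible non-concentricity of $Q$ and $R$ — so this summand is $(f_R-f_Q)b_Q+O(|f_R-f_Q|)\lesssim\KK(Q,R)$ since $|b_Q|\lesssim 1$.

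The remaining term $m_Q\!\big(T((f-f_Q)\mathbf{1}_{2R\setminus 2Q})\big)$ is the genuinely hard one, and this is the step I expect to be the main obstacle. After replacing $\krl(x,y)$ by $\krl(x_0,y)$, with $x_0$ the centre of $Q$, at the cost of an error summed away by \eqref{e_cz3}, one decomposes the annular region $2R\setminus 2Q$ along a chain of doubling cubes $Q=P_0\subset P_1\subset\dots\subset P_L=R$ with no doubling cube $2^jP_i$ strictly between $P_i$ and $P_{i+1}$; then $\KK(P_i,P_{i+1})\lesssim 1$ and, crucially, $\sum_i\mu(P_i)/\ell^n(P_i)\lesssim\KK(Q,R)$. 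On the shell attached to the $i$-th step one writes $f-f_Q=(f-f_{P_{i+1}})+(f_{P_{i+1}}-f_Q)$; the ``local'' parts sum to $\lesssim\KK(Q,R)$, each being of the order $\mu(P_{i+1})/\ell^n(P_{i+1})$. For the ``constant'' parts one telescopes $f_{P_{i+1}}-f_Q$ and re-sums, so that the shells reassemble into annuli $2P_i\setminus 2Q$ on which \eqref{e_cz2} again applies, and one plays the slow variation of $y\mapsto\int_{2P_i\setminus 2Q}\krl(x_0,y)\,d\mu(y)$ — whose total variation along the chain is $\lesssim\KK(Q,R)$ by \eqref{e_cz2} and $n$-dimensionality — against the partial sums of the jumps $f_{P_{i+1}}-f_{P_i}$, invoking once more the identity $m_Q(T\mathbf{1}_{2P\setminus 2Q})=b_Q-b_P+O(1)$ to feed \eqref{e_main_K} back in and absorb what would otherwise be a loss of $\KK(Q,R)^2$. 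This is precisely where the special cancellation \eqref{e_cz2} — rather than \eqref{e_cz1} and \eqref{e_cz3} alone — is indispensable; once it is carried through, the four summands assemble to $|(Tf)_Q-(Tf)_R|\lesssim\KK(Q,R)$, and hence $Tf\in\rbmo(\mu)$ with $\|Tf\|\lesssim\|f\|$.
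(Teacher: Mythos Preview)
Your overall scheme is the paper's: the same three–term decomposition $f=f_Q+(f-f_Q)\chi_{2Q}+(f-f_Q)\chi_{\rd\setminus 2Q}$, essentially the same choice of $(Tf)_Q$ (the paper takes $f_{2Q}\langle T\mathbf 1\rangle_Q+b_{3,Q}$, which differs from your $b_Qf_Q+m_Q(Tf_3)$ by $O(1)$), and the same treatment of the oscillation via $L^2$-boundedness, John--Nirenberg, and an annular sum. For the $K$-condition the paper likewise isolates the tail piece (Lemma~\ref{l_23_K}), passes through an intermediate dyadic cube $Q_0\supset R$, splits the annulus $2Q_0\setminus 2Q$ into dyadic shells, and uses Abel summation together with the cancellation \eqref{e_cz2}; your doubling chain $P_0\subset\dots\subset P_L$ is a cosmetic variant of the same idea.

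The point where your sketch goes wrong is the claimed identity $m_Q(T\mathbf 1_{2P\setminus 2Q})=b_Q-b_P+O(1)$ and the plan to ``feed \eqref{e_main_K} back in'' to defuse a $K(Q,R)^2$ loss. That identity is false: \eqref{e_cz2} (after the same non-concentricity correction you use elsewhere) gives $m_Q(T\mathbf 1_{2P\setminus 2Q})=O(1)$ outright, with no $b_P$ term appearing, so there is no way to bring \eqref{e_main_K} into this estimate. Indeed, the paper's Lemma~\ref{l_23_K} establishes $|b_{3,Q}-b_{3,R}|\le C\|f\|K(Q,R)$ \emph{without} using the $T\mathbf 1$ hypotheses at all; \eqref{e_main_osc}--\eqref{e_main_K} enter only in Section~\ref{s_proof21}, solely to control the $f_{2Q}\langle T\mathbf 1\rangle_Q$ contribution. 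After Abel summation the quantity you must bound is $\sum_i|f_{P_{i+1}}-f_{P_i}|$ (equivalently, in the paper's dyadic version, $\sum_j|f_{2^{j+2}Q}-f_{2^{j+1}Q}|$), and the claim is that this sum is $\lesssim K(Q,R)\|f\|$; this is what the paper records in \eqref{e_D2}, using that $\sum_j K(2^{j+1}Q,2^{j+2}Q)\lesssim K(2Q,2Q_0)$. That is the estimate you need to justify directly, and the $T\mathbf 1$ assumption plays no role in it.
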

\begin{rem}
We say that $T$ is bounded on
$RBMO(\mu)$ if the operators $T_\er$, $\er > 0$, are uniformly bounded on $RBMO(\mu)$. Also, \eqref{e_main_osc} and \eqref{e_main_K} similarly mean that these estimates hold for $T_\er$  uniformly in $\er > 0$.
\end{rem}
\begin{rem}
We implicitly assume in Theorem~\ref{t_main} that $T1\in L^\infty(\mu)$.
Indeed, this property follows from \eqref{e_cz2}; see Lemma~\ref{l_T1bdd}.
If $T1$ is a constant, then Theorem~\ref{t_main} clearly guarantees that
$T$ is bounded on $\rbmo(\mu)$.
\end{rem}
\begin{rem}
Property~\eqref{e_main_osc} is, in a sense, similar to the oscillation condition used in \cite{BCFST13},
where a $T1$ theorem for $\textrm{BMO}_H$ in the {H}ermite-{C}alder\'{o}n-{Z}ygmund setting is obtained.
\end{rem}

The final step in the proof of Theorem~\ref{t_main} uses $\|\cdot\|$ only; however, the following
semi-norm on $\rbmo(\mu)$ is crucial for certain auxiliary results.

\begin{df}\label{d_rbmo_A}
Let $f\in L^1_{loc}(\mu)$. Fix a constant $\rho>1$.
Let $\|f\|_{\mathfrak{A}, \rho}$ denote the infimum of the constants $C_{\mathfrak{A}}= C_{\mathfrak{A}, \rho}>0$
with the following properties: for each cube $Q$, there exists $f_Q \in \Rbb$ such that
\begin{align}
\sup_Q
\frac{1}{\mu(\rho Q)}
 \int_Q |f(x) - f_Q|\, d\mu (x)
&\le C_{\mathfrak{A}}, \label{e_A_osc}
\\
 |f_Q - f_R|
&\le C_{\mathfrak{A}} K(Q,R)\quad \text{for any two cubes\ } Q \subset R. \label{e_A_K}
\end{align}
\end{df}

\begin{rem}
The original semi-norm on $\rbmo(\mu)$ from \cite{T} is different from those introduced
in Definitions~\ref{d_rbmo} and \ref{d_rbmo_A}. Nevertheless, all these semi-norms are equivalent;
see Section~\ref{s_aux} for further details.
\end{rem}

\subsection{Notation}
As usual, the symbol $C$ denotes an absolute constant whose value can vary from line to line.
Notation $C_{\mathfrak{A}}$, $C_{\mathfrak{B}}$, etc.\ is used in certain specific situations.

\subsection{Organization of the paper}
Auxiliary results are presented in Section~\ref{s_aux}.
Section~\ref{s_cstr} is devoted to estimates related to the main technical decomposition of functions from $\rbmo$;
equivalence of Definitions~\ref{d_rbmo} and \ref{d_rbmo_A} is essential on this step.
The proof of Theorem~\ref{t_main} is given in Section~\ref{s_proof21}.

\section{Auxiliary results}\label{s_aux}

\subsection{Equivalent definitions of $\rbmo$}

As mentioned in the introduction, Definition~\ref{d_rbmo} is not the original one for $\rbmo(\mu)$ in \cite{T}.
In the present section, we show that Definition~\ref{d_rbmo}
and the definitions of the regular BMO from \cite{T} are equivalent.

Firstly, recall several notions introduced by Tolsa \cite{T}.
Let $\rho>1$ and $f \in L^1_{loc}(\mu)$.
Given a cube $Q\subset \rd$, let $\left<f \right>_Q$ denote the standard $\mu$-average of $f$ over $Q$, that is,
\[
\left<f \right>_Q = \frac{1}{\mu(Q)}\int_Q f \, d\mu.
\]

$\bullet$ Let $\|f\|_{\mathfrak{B}, \rho}$ denote the infimum of the constants
$C_{\mathfrak{B}} = C_{\mathfrak{B}, \rho}>0$
with the following properties:
\begin{align*}
\frac{1}{\mu(\rho Q)}
\int_Q |f - \langle f\rangle_{\widetilde{Q}}| d\mu
 &\le C_{\mathfrak{B}}\ \text{for any cube $Q$ (centered at some point of $\textrm{supp}(\mu)$)},
\\
|\langle f\rangle_Q - \langle f\rangle_R|
 &\le C_{\mathfrak{B}} K(Q,R)\  \text{for any two doubling cubes\ } Q \subset R,
\end{align*}
where $\widetilde{Q}$ denotes the smallest doubling cube in the sequence $Q, 4Q, 4^2Q, \dots$.

\begin{rem}
The original definition of $\widetilde{Q}$ in \cite{T} uses the sequence {$Q, 2Q, 2^2Q, \dots$}. The present definition
of $\widetilde{Q}$ is based on $\alpha=4$; related details are
given in Definition~\ref{d_doubling} and Remark~\ref{r_alpha}.
See also \cite{HYY12} for similar definitions with $\alpha>1$.
\end{rem}

$\bullet$ Let $\|f\|_{\mathfrak{C}, \rho}$ denote the infimum of the constants $C_{\mathfrak{C}}= C_{\mathfrak{C}, \rho}>0$
with the following properties:  for any cube Q
\begin{align*}
\int_Q |f - \langle f\rangle_Q|\, d\mu
&\le C_{\mathfrak{C}} \mu(\rho Q),
\\
|\langle f\rangle_Q - \langle f\rangle_R|
&\le C_{\mathfrak{C}} K(Q,R)
\left(
\frac{\mu(\rho Q)}{\mu(Q)}
+
\frac{\mu(\rho R)}
{\mu(R)}
\right)\quad\textrm{for any two cubes\ } Q \subset  R.
\end{align*}

$\bullet$ Let $\|f\|_{\mathfrak{D}}$ denote the infimum of the constants $C_{\mathfrak{D}}>0$
with the following properties:
\begin{equation}\label{e_dfD_osc}
\int_Q |f - \langle f\rangle_Q|\, d\mu
\le C_{\mathfrak{D}} \mu(Q)\quad \textrm{for any doubling cube\ } Q
\end{equation}
and
\begin{equation}\label{e_dfD_K}
 |\langle f\rangle_Q - \langle f\rangle_R|
\le C_{\mathfrak{D}} K(Q,R)\quad \textrm{for any two doubling cubes\ } Q \subset R.
\end{equation}

The property $\|f\|_{\mathfrak{B}, \rho}<\infty$ is used to define the regular BMO space in \cite{T}.
By \cite[Lemma~2.6]{T}, the norms $\|\cdot\|_{\mathfrak{A}, \rho}$ are equivalent for different $\rho>1$;
by \cite[Lemma~2.8]{T}, $\|\cdot\|_{\mathfrak{B}, \rho}$ and $\|\cdot\|_{\mathfrak{A}, \rho}$ are equivalent.
By \cite[Lemma~2.10]{T}, $\|\cdot\|_{\mathfrak{C},\rho}$ and $\|\cdot\|_{\mathfrak{D}}$ are equivalent to $\|\cdot\|_{\mathfrak{B}, \rho}$.

Standard arguments show that Definition~\ref{d_rbmo} and the property $\|f\|_{\mathfrak{D}}<\infty$ define the same space,
with equivalent norms.
Indeed, if $\|f\|_{\mathfrak{D}}<\infty$, then trivially $f\in \rbmo(\mu)$ with $C_{\mathfrak{E}} = \|f\|_{\mathfrak{D}}$.
Now, assume that $f\in \rbmo(\mu)$. Property~\eqref{e_df_osc} guarantees that
\[
|\left<f\right>_Q - f_Q|\le C_{\mathfrak{E}}
\]
for any doubling $Q$.
Hence, \eqref{e_df_osc} implies \eqref{e_dfD_osc} with $C_{\mathfrak{D}}= 2 C_{\mathfrak{E}}$;
\eqref{e_df_K} implies \eqref{e_dfD_K} with $C_{\mathfrak{D}}= 3 C_{\mathfrak{E}}$.
Therefore, $\|\cdot\|_{\mathfrak{D}}$ and $\|\cdot\|_{\mathfrak{E}}$ are equivalent.

In the arguments related to the proof of Theorem~\ref{t_main}, we will use $\|\cdot\|_{\mathfrak{A}, \rho}$.
Thus, for further reference, we separately formulate a particular conclusion from the above arguments
as the following lemma.
\begin{lemma}\label{l_AE_equiv}
Given a constant $\rho>1$, $\|\cdot\|_{\mathfrak{A}, \rho}$ is an equivalent norm on the space $\rbmo(\mu)$ modulo constants.
\end{lemma}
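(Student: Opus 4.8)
The plan is to deduce the lemma by chaining the norm equivalences already assembled above. The discussion preceding the statement has verified, by elementary arguments, that the property $\|f\|_{\mathfrak{D}}<\infty$ and Definition~\ref{d_rbmo} describe the same space with $\|\cdot\|_{\mathfrak{D}}$ and $\|\cdot\|_{\mathfrak{E}}$ equivalent, and it was noted there that $\|\cdot\|_{\mathfrak{E}}$ is a norm on $\rbmo(\mu)$ modulo constants. Thus it remains only to show that, for each fixed $\rho>1$, the semi-norm $\|\cdot\|_{\mathfrak{A},\rho}$ is equivalent to $\|\cdot\|_{\mathfrak{D}}$, with implied constants depending on $\rho$, $n$, $m$ alone.

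For this I would simply invoke the cited results of Tolsa: \cite[Lemma~2.8]{T} gives the equivalence of $\|\cdot\|_{\mathfrak{A},\rho}$ and $\|\cdot\|_{\mathfrak{B},\rho}$, and \cite[Lemma~2.10]{T} gives the equivalence of $\|\cdot\|_{\mathfrak{B},\rho}$ with both $\|\cdot\|_{\mathfrak{C},\rho}$ and $\|\cdot\|_{\mathfrak{D}}$. Concatenating, $\|\cdot\|_{\mathfrak{A},\rho}$, $\|\cdot\|_{\mathfrak{B},\rho}$, $\|\cdot\|_{\mathfrak{D}}$ and $\|\cdot\|_{\mathfrak{E}}$ are pairwise equivalent, so $\{f\in L^1_{loc}(\mu):\|f\|_{\mathfrak{A},\rho}<\infty\}$ coincides with $\rbmo(\mu)$ and $\|\cdot\|_{\mathfrak{A},\rho}$ is equivalent to $\|\cdot\|=\|\cdot\|_{\mathfrak{E}}$. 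The assertion that $\|\cdot\|_{\mathfrak{A},\rho}$ is a norm modulo constants then transfers automatically: homogeneity and the triangle inequality are immediate from Definition~\ref{d_rbmo_A}, while $\|f\|_{\mathfrak{A},\rho}=0$ forces $\|f\|_{\mathfrak{E}}=0$ by the two-sided estimate, hence $f$ is $\mu$-a.e.\ constant.

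The one point requiring a word of care is that Tolsa's Lemmas~2.6, 2.8, 2.10 are formulated for the dilation factor $\alpha=2$ and under the restriction $1\le n\le m$, whereas here $\widetilde{Q}$ and the notion of doubling cube are built with $\alpha=4$ and we allow $0<n\le m$. As recorded in Remark~\ref{r_alpha} (and \cite{HYY12}), the passage to $\alpha>1$ and to the full range of $n$ is routine and does not affect any of these statements, so the cited lemmas apply verbatim in the present setting. The only genuine obstacle, therefore, is bookkeeping — matching the quantifiers (constants $f_Q$ attached to \emph{all} cubes in Definition~\ref{d_rbmo_A} versus the doubling-cube formulations underlying $\|\cdot\|_{\mathfrak{B},\rho}$ and $\|\cdot\|_{\mathfrak{D}}$, and the replacement of a generic $f_Q$ by the average $\langle f\rangle_{\widetilde Q}$ over the smallest doubling cube) — but this is precisely what Tolsa's Lemmas~2.6--2.10 are designed to absorb, so no new argument is needed.
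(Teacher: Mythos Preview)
Your proposal is correct and matches the paper's approach exactly: the lemma is stated in the paper as a ``particular conclusion from the above arguments,'' namely chaining Tolsa's Lemmas~2.6, 2.8, 2.10 (equivalence of $\|\cdot\|_{\mathfrak{A},\rho}$, $\|\cdot\|_{\mathfrak{B},\rho}$, $\|\cdot\|_{\mathfrak{C},\rho}$, $\|\cdot\|_{\mathfrak{D}}$) with the elementary $\|\cdot\|_{\mathfrak{D}}\sim\|\cdot\|_{\mathfrak{E}}$ argument, together with the $\alpha=4$ caveat handled via Remark~\ref{r_alpha} and \cite{HYY12}. There is nothing to add.
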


\subsection{John--Nirenberg inequality for $\rbmo$}
Tolsa \cite{T} proved the following version of the John--Nirenberg inequality for $\rbmo$.
\begin{theorem}[see {\cite[Theorem~3.1]{T}}]\label{t_JN}
Let $f \in\rbmo(\mu)$, $\rho> 1$, and let $\{b_Q\}_Q$ be a collection of numbers
satisfying
\begin{align*}
\sup_Q
\frac{1}{\mu(\rho Q)}
\int_Q |f(x) - b_Q|\, d\mu(x)
&\le C \|f\|_{\mathfrak{B},\rho} \\
|b_Q - b_R|
&\le
 C K(Q,R) \|f\|_{\mathfrak{B},\rho}
\end{align*}
for any two cubes $Q \subset R$,
with an absolute constant $C>0$.
Then for any cube Q and any $\lambda > 0$, we have
\[
\mu \{x \in  Q : |f(x) - b_Q| > \lambda \} \le C_{J\!N} \mu(\rho Q) \exp
\left(\frac{-c_{J\!N}\lambda}{\|f\|_{\mathfrak{B},\rho}}
\right)
\]
with $C_{J\!N}, c_{J\!N}> 0$ depending on $\rho$.
\end{theorem}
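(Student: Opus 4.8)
The plan is to run the classical iterated Calder\'on--Zygmund (stopping-cube) proof of the John--Nirenberg inequality, adapted to Tolsa's non-doubling setting. By homogeneity we normalise $\|f\|_{\mathfrak{B},\rho}=1$, so the two hypotheses become $\sup_Q\frac{1}{\mu(\rho Q)}\int_Q|f-b_Q|\,d\mu\le C$ and $|b_Q-b_R|\le C\,K(Q,R)$ for $Q\subset R$. Fix a cube $Q$ and $\lambda>0$. When $\lambda$ lies in a bounded range the assertion is trivial, since $\mu\{x\in Q:|f(x)-b_Q|>\lambda\}\le\mu(Q)\le\mu(\rho Q)$; so we may take $\lambda$ large and absorb the bounded range into $C_{J\!N}$.

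Inside $Q$ we build a decreasing sequence of layers $\mathcal{G}_0=\{Q\},\mathcal{G}_1,\mathcal{G}_2,\dots$ of cubes --- within each layer pairwise disjoint or of bounded overlap, with each cube of $\mathcal{G}_{k+1}$ contained in a fixed dilate of a cube of $\mathcal{G}_k$ --- together with the constants $\{b_P\}$ supplied by the hypotheses. Given $R\in\mathcal{G}_k$, the cubes of $\mathcal{G}_{k+1}$ lying in $R$ are selected by a Calder\'on--Zygmund stopping rule at a large absolute level $M$: one looks at $(4\rho,\beta)$-doubling cubes centred on $\operatorname{supp}\mu$ on which the $\mu$-average of $|f-b_R|$ exceeds $M$, takes essentially maximal such cubes, and extracts a Besicovitch subfamily covering $\{x\in R:|f(x)-b_R|>M\}$. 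Three properties are to be secured. \emph{(i) Measure decay:} since each selected $P$ carries an average $>M$ of $|f-b_R|$ and is doubling (so $\mu(\rho P)\le C\mu(P)$), bounded overlap together with the first hypothesis gives $\sum_{P\subset R}\mu(\rho P)\le\frac{C}{M}\int_R|f-b_R|\,d\mu\le\frac{C}{M}\mu(\rho R)\le\frac12\mu(\rho R)$ for $M$ large, whence $\sum_{P\in\mathcal{G}_k}\mu(\rho P)\le2^{-k}\mu(\rho Q)$ after iterating over layers. \emph{(ii) Propagation of constants:} the selection is arranged so that $|b_P-b_R|\le C(M)$ for consecutive $P\subset R$ --- this uses maximality and the doubling property of $P$ (and a passage to doubling hulls where the naive stopping cube is much smaller than $R$), not the a priori bound $|b_P-b_R|\le CK(P,R)$ --- whence inductively $|b_P-b_Q|\le C(M)k$ for $P\in\mathcal{G}_k$. \emph{(iii) Detection:} by the Lebesgue differentiation theorem for $\mu$ and the density of doubling cubes at small scales, $\mu$-a.e.\ $x\in R$ with $|f(x)-b_R|>M$ belongs to some $P\in\mathcal{G}_{k+1}$, $P\subset R$.

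Combining (ii) and (iii): if $x\in Q$ and $|f(x)-b_Q|>2C(M)k$ then, peeling layer by layer, the inequality $|f(x)-b_R|\ge|f(x)-b_Q|-|b_R-b_Q|>M$ survives at each stage, so $x$ lies in $G_k:=\bigcup_{P\in\mathcal{G}_k}P$; hence by (i),
\[
\mu\{x\in Q:|f(x)-b_Q|>2C(M)k\}\le\mu(G_k)\le\sum_{P\in\mathcal{G}_k}\mu(\rho P)\le2^{-k}\mu(\rho Q).
\]
For $\lambda$ large, the choice $k=\lfloor\lambda/(2C(M))\rfloor$ gives $\mu\{x\in Q:|f(x)-b_Q|>\lambda\}\le2\cdot2^{-\lambda/(2C(M))}\mu(\rho Q)$, i.e.\ the claimed bound with $c_{J\!N}=(\log2)/(2C(M))$ and $C_{J\!N}=2$; all constants depend only on $\rho$, $n$ and $\beta_n$.

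The main obstacle, and the technical heart, is the absence of doubling. One cannot subdivide cubes dyadically while keeping them centred on $\operatorname{supp}\mu$, and $\mu(\rho P)/\mu(P)$ is unbounded for a general cube $P$; this is exactly why $\mu(\rho Q)$, not $\mu(Q)$, appears on the right, and why the stopping cubes must be taken doubling, so that $\mu(\rho P)\approx\mu(P)$ and (i) closes. The genuinely delicate step is (ii): since $K(P,R)$ is unbounded for general $P\subset R$, the linear-in-$k$ growth of $|b_P-b_Q|$ required in (iii) cannot come from the a priori estimate on the $b_Q$, and must be wrung out of the averages by a careful choice of stopping cubes and doubling hulls, reconciling (i), (ii) and (iii) at once --- this is carried out in \cite[Theorem~3.1]{T}.
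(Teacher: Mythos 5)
You should first note that the paper itself contains no proof of Theorem~\ref{t_JN}: it is imported verbatim from Tolsa \cite[Theorem~3.1]{T}, so the only benchmark is Tolsa's argument, whose general architecture (stopping-time generations of doubling cubes selected via a Besicovitch covering, measure decay recorded against $\mu(\rho\,\cdot)$, linear growth of the constants $b_P$ along generations, and the final exponential bookkeeping) your outline reproduces correctly.

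As a proof, however, your proposal has a genuine gap, and it is the one you flag yourself: step (ii), the bounded jump $|b_P-b_R|\le C(M)$ between consecutive stopping cubes, is asserted but never established, and you close by deferring it to \cite[Theorem~3.1]{T} --- the very statement under proof, so the argument is circular as written. That step is exactly where the non-doubling difficulty lives: the hypothesis only yields $|b_P-b_R|\le CK(P,R)$ with $K(P,R)$ unbounded, and the classical device (compare $b_P$ with an average of $f$ over the parent of the stopping cube) requires simultaneously that the selected cube $P$ be doubling, so that $\frac{1}{\mu(P)}\int_P|f-b_P|\,d\mu\le C\frac{\mu(\rho P)}{\mu(P)}\le C$, that the average of $|f-b_R|$ over $P$ be bounded above by $C(M)$ (this gives (ii)), and that a mass of size at least $\sim M\mu(P)$ sit on a cube comparable to $P$ (this gives the decay (i)). Reconciling these forces a careful passage between a maximal cube on which the average exceeds $M$ and its doubling hull, together with the structural fact that $K(Q,\widetilde{Q})\le C$ when all intermediate cubes are non-doubling; none of this is carried out in your sketch. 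The remaining ingredients (normalisation, the trivial range of $\lambda$, the $2^{-k}$ decay from bounded overlap, the peeling argument, detection via differentiation) are the standard John--Nirenberg machinery and are fine, but without an explicit construction securing (i), (ii) and (iii) at once, what you have is a correct plan plus a citation of the result, not a proof.
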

We will need the following corollary related to Definition~\ref{d_rbmo_A}.
\begin{lemma}\label{l_A_p_Q}
Let $1< p <\infty$ and $f \in\rbmo(\mu)$, $\rho>1$.
Let $\{f_Q\}_Q$ be such numbers that \eqref{e_A_osc} and \eqref{e_A_K}
hold with $C_{\mathfrak{A}} = 2\|f\|_{\mathfrak{A}, \rho}$.
Then
\[
\left( \frac{1}{\mu(\rho Q)} \int_Q |f- f_Q|^p \right)^\frac{1}{p} \le C \|f\|
\]
for any cube $Q\subset \rd$, where $C = C(C_{J\!N}, c_{J\!N}, p, \rho) > 0$.
\end{lemma}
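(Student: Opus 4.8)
The plan is to deduce the lemma from the John--Nirenberg inequality of Theorem~\ref{t_JN} by a routine layer-cake (Cavalieri) argument. First I would check that the given collection $\{f_Q\}_Q$ satisfies the hypotheses of Theorem~\ref{t_JN}. By assumption, \eqref{e_A_osc} and \eqref{e_A_K} hold with $C_{\mathfrak{A}}=2\|f\|_{\mathfrak{A},\rho}$; since $\|\cdot\|_{\mathfrak{A},\rho}$ is equivalent to $\|\cdot\|_{\mathfrak{B},\rho}$ by \cite[Lemma~2.8]{T} (and to $\|\cdot\|=\|\cdot\|_{\mathfrak{E}}$ by Lemma~\ref{l_AE_equiv}), we get $C_{\mathfrak{A}}\le C\|f\|_{\mathfrak{B},\rho}$ with an absolute $C>0$. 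Thus $\{f_Q\}_Q$ verifies exactly the two displayed inequalities preceding the conclusion of Theorem~\ref{t_JN}, with $b_Q$ replaced by $f_Q$, and the theorem yields, for every cube $Q$ and every $\lambda>0$,
\[
\mu\{x\in Q:\ |f(x)-f_Q|>\lambda\}\le C_{J\!N}\,\mu(\rho Q)\exp\!\left(\frac{-c_{J\!N}\lambda}{\|f\|_{\mathfrak{B},\rho}}\right).
\]

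Next I would integrate this distributional estimate. Since $\mu$ is $n$-dimensional, every cube has $\mu(\rho Q)<\infty$, so the layer-cake formula gives
\[
\int_Q|f-f_Q|^p\,d\mu=p\int_0^\infty\lambda^{p-1}\,\mu\{x\in Q:\ |f(x)-f_Q|>\lambda\}\,d\lambda.
\]
Plugging in the bound from Theorem~\ref{t_JN} and evaluating the resulting Gamma integral,
\[
\int_Q|f-f_Q|^p\,d\mu\le C_{J\!N}\,\mu(\rho Q)\,p\int_0^\infty\lambda^{p-1}e^{-c_{J\!N}\lambda/\|f\|_{\mathfrak{B},\rho}}\,d\lambda
=C_{J\!N}\,\Gamma(p+1)\,c_{J\!N}^{-p}\,\mu(\rho Q)\,\|f\|_{\mathfrak{B},\rho}^{\,p}.
\]
Dividing by $\mu(\rho Q)$, taking $p$-th roots, and using once more the equivalence $\|f\|_{\mathfrak{B},\rho}\le C\|f\|$ yields the claimed inequality with $C=C(C_{J\!N},c_{J\!N},p,\rho)>0$; in particular this shows $f\in L^p_{loc}(\mu)$, so the left-hand side is finite to begin with.

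There is no genuine obstacle here: the statement is essentially the $L^p$ reformulation of the John--Nirenberg exponential bound, and the only point requiring care is the bookkeeping of the several equivalent (semi-)norms on $\rbmo(\mu)$ — one must make sure that the normalization $C_{\mathfrak{A}}=2\|f\|_{\mathfrak{A},\rho}$ indeed translates into the hypothesis of Theorem~\ref{t_JN} (a bound by an absolute multiple of $\|f\|_{\mathfrak{B},\rho}$), and that the final estimate can be recorded in terms of $\|f\|$ rather than $\|f\|_{\mathfrak{B},\rho}$.
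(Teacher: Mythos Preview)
Your proposal is correct and follows essentially the same route as the paper: apply Theorem~\ref{t_JN} to the collection $\{f_Q\}$, use the layer-cake formula to convert the exponential distributional bound into an $L^p$ estimate, and then invoke the equivalence of the $\rbmo$ semi-norms (Lemma~\ref{l_AE_equiv}) to express the result in terms of $\|f\|$. The only difference is that you spell out the Gamma-integral computation and the norm bookkeeping a bit more explicitly than the paper does.
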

\begin{proof}
A standard argument is applicable.
Indeed, by Theorem~\ref{t_JN} and Lemma~\ref{l_AE_equiv},
\begin{align*}
\frac{1}{\mu(\rho Q)}
\int_Q |f - f_Q|^p \, d\mu
&=
\frac{1}{\mu(\rho Q)}
\int_0^\infty
p \lambda^{p-1} \mu\{x : |f(x) - f_Q| > \lambda\}\, d\lambda \\
&\le C_{J\!N}
\int_0^\infty
p \lambda^{p-1}
\exp\left(
\frac{-c_{J\!N}\lambda}{\|f\|_{\mathfrak{A}, \rho}}\right)\, d\lambda  \\
&\le C \|f\|_{\mathfrak{A}, \rho}^p \\
&\le C \|f\|^p,
\end{align*}
as required.
\end{proof}

\subsection{Properties of $\rbmo$}

\begin{lemma}\label{l_fQ}
Let $\mu$ be a finite $n$-dimensional measure, $f\in\rbmo(\mu)$, $\rho> 1$, and let $\{f_Q\}_Q$
be numbers such that \eqref{e_A_osc} and \eqref{e_A_K} hold with $C_{\mathfrak{A}} = 2\|f\|_{\mathfrak{A},\rho} < \infty$. Then
\[
|f_Q| \le C_f \KK(Q)\quad\textrm{for all cubes}\ Q\subset \rd,
\]
where the number $C_f>0$ does not depend on $Q$.
\end{lemma}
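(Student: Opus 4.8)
The plan is to bound $|f_Q|$ by comparing the constant $f_Q$ with a constant $f_{Q_0}$ attached to a fixed reference cube $Q_0$ that is large enough to see essentially all of the mass of $\mu$, and then to control $|f_{Q_0}|$ directly. Since $\mu$ is finite and $n$-dimensional, fix once and for all a cube $Q_0$ (centered at a point of $\mathrm{supp}\,\mu$) large enough that $\mu(Q_0) \ge \frac12 \mu(\rd)$; the quantity $|f_{Q_0}|$ is then just some finite number depending on $f$ but not on $Q$. This will be the seed of the constant $C_f$.

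First I would treat an arbitrary cube $Q$. If $Q \subset Q_0$, then \eqref{e_A_K} gives immediately $|f_Q - f_{Q_0}| \le C_{\mathfrak{A}} \KK(Q, Q_0)$, so $|f_Q| \le |f_{Q_0}| + C_{\mathfrak{A}} \KK(Q, Q_0)$, and it remains to compare $\KK(Q, Q_0)$ with $\KK(Q) = \KK(Q, 2^k Q)$, where $2^k Q$ is the smallest dilate of $Q$ carrying more than half the total mass. Here one uses that both $2^k Q$ and $Q_0$ are "mass-saturating" cubes: since $\mu(2^k Q) > \frac12 \mu(\rd)$ and $\mu(Q_0) \ge \frac12 \mu(\rd)$, the cubes $2^k Q$ and $Q_0$ must intersect, and comparing side-lengths together with the $n$-dimensionality bound \eqref{e_ndim} and the logarithmic growth bound $\KK(Q,R) \le C\log(\ell(R)/\ell(Q))$ lets me show $\KK(Q, Q_0) \le C\, \KK(Q) + C$, with $C$ depending only on $Q_0$, $n$, and the structural constants. (If $\ell(2^kQ) \ge \ell(Q_0)$ this is essentially monotonicity of $\KK$ in the outer cube combined with $\KK(Q_0, 2^kQ)\le C$; if $\ell(2^kQ) < \ell(Q_0)$ one instead bounds $\KK(Q,Q_0)$ by $\KK(Q,2^kQ)$ plus $\KK(2^kQ,Q_0)$, the latter being $O(1)$ since $2^kQ$ already carries half the mass so only boundedly many further doublings reach $\ell(Q_0)$.)

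Next I would handle a general $Q$ not contained in $Q_0$ by enlarging: let $R$ be a cube with $Q \subset R$ and $Q_0 \subset R$, chosen with $\ell(R)$ comparable to $\max(\ell(Q), \ell(Q_0))$ plus the distance from $Q$ to $Q_0$ — but in fact, since the final bound we want is $C_f \KK(Q)$ and $\KK(Q) \ge 1$ always, it is cleanest to note that the only case that really matters for largeness of $|f_Q|$ is $\ell(Q)$ small (large cubes have $\KK(Q,Q_0) = O(1)$ directly), and for small $Q$ the mass-saturating dilate $2^kQ$ contains $Q_0$ up to a bounded dilation once $\ell(Q)$ is below $\ell(Q_0)$ — more precisely $2^kQ$ meets $Q_0$ and has side-length $\gtrsim \ell(Q_0)$, so a fixed bounded dilation of $2^kQ$ contains $Q_0$, and then \eqref{e_A_K} applied to $Q \subset C 2^k Q \supset Q_0$ reduces everything to the previous paragraph. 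In all cases one arrives at $|f_Q| \le |f_{Q_0}| + C\,\KK(Q) =: C_f \KK(Q)$.

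The main obstacle is the bookkeeping in comparing $\KK(Q, Q_0)$ with $\KK(Q)$ uniformly over all positions and scales of $Q$: one must check that replacing the specific outer cube $2^kQ$ by the fixed cube $Q_0$ costs only an additive $O(1)$ in the $\KK$-coefficient, which hinges on the geometric fact that any two cubes each carrying more than half of $\mu(\rd)$ are forced to overlap and to have comparable (up to bounded dilation) side-lengths-of-influence. Once that lemma-within-the-proof is in place, the rest is a routine application of \eqref{e_A_K} and the triangle inequality.
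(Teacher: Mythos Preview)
Your strategy is workable but more circuitous than the paper's, and a couple of your justifications need tightening. First, $\KK(Q,Q_0)$ is only defined for $Q\subset Q_0$, so the sentence ``large cubes have $\KK(Q,Q_0)=O(1)$ directly'' must be rewritten via a common super-cube, as you in fact do a line later. Second, your reason for $\KK(2^kQ,Q_0)=O(1)$ (``only boundedly many further doublings reach $\ell(Q_0)$'') is not correct: the number of doublings from $\ell(2^kQ)$ to $\ell(Q_0)$ need not be bounded. What \emph{is} bounded is the sum $\sum_{j\ge1}\mu(2^j\!\cdot\!2^kQ)/\ell^n(2^j\!\cdot\!2^kQ)$, because $\mu(2^kQ)\ge\tfrac12\mu(\rd)$ together with $n$-dimensionality forces $\ell(2^kQ)\ge\varkappa>0$, and then the finiteness of $\mu$ makes the tail geometric. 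With these repairs your route goes through.

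The paper bypasses the whole comparison of $\KK(Q,Q_0)$ with $\KK(Q)$ by pivoting through $2^kQ$ itself rather than a fixed $Q_0$. It first proves the uniform bound $|f_{Q_2}|\le C_0(f)$ for \emph{every} cube $Q_2$ with $\mu(Q_2)\ge\tfrac12\mu(\rd)$: fix once a cube $Q_1$ with $\ell(Q_1)\ge1$, take any $Q_3\supset Q_1\cup Q_2$, and telescope $|f_{Q_2}|\le|f_{Q_2}-f_{Q_3}|+|f_{Q_3}-f_{Q_1}|+|f_{Q_1}|$; both $\KK(Q_1,Q_3)$ and $\KK(Q_2,Q_3)$ are $O(1)$ since $\ell(Q_1),\ell(Q_2)$ are bounded below and $\mu$ is finite. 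Then for an arbitrary $Q$ one writes $|f_Q|\le|f_Q-f_{2^kQ}|+|f_{2^kQ}|\le 2\|f\|_{\mathfrak{A},\rho}\,\KK(Q,2^kQ)+C_0(f)$, and $\KK(Q,2^kQ)=\KK(Q)$ \emph{by definition}. The geometric ``lemma-within-the-proof'' you anticipated thus evaporates: by letting the reference cube move with $Q$, the comparison becomes a tautology.
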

\begin{proof}
Without loss of generality, we assume that $\mu(\rd) =1$.
Fix a cube $Q_1$ such that $\ell(Q_1)\ge 1$ and $\mu(Q_1) > 0$. Put
\[
C_1(f) = |f_{Q_1}|.
\]

Firstly, let $Q_2$ be a cube such that $\mu(Q_2)\ge\frac{1}{2}$.
Select a cube $Q_3$ such that $Q_3 \supset Q_1 \cup Q_2$.
By the choice of $f_{Q_j}$, $j=1,2,3$,
\[
\aligned
|f_{Q_2}|
&\le |f_{Q_2} - f_{Q_3}| + |f_{Q_3} - f_{Q_1}| +|f_{Q_1}| \\
&\le 2K(Q_2, Q_3)\|f\|_{\mathfrak{A},\rho} + 2K(Q_1, Q_3)\|f\|_{\mathfrak{A},\rho} + C_1(f).
\endaligned
\]
Since $\mu$ is $n$-dimensional, we have $\ell(Q_2)\ge \varkappa>0$.
Hence,
\[
K(Q_m, Q_3) \le 1+ \sum_{j\ge 1} \frac{\mu(2^j Q_m)}{\ell^n(2^j Q_m)}
\le 1+ \sum_{j\ge 1} \frac{1}{\ell^n(2^j Q_m)}
 \le C
\]
for $m=1,2$.
Therefore,
\[
|f_{Q_2}| \le C\|f\|_{\mathfrak{A},\rho} + C_1(f) := C_0(f).
\]

Now, consider a cube $Q\subset \rd$ such that $\mu(Q)<\frac{1}{2}$.
Let $k$ be the smallest positive integer such that $\mu(2^k Q)\ge \frac{1}{2}$. Since the cube $2^k Q$ has the properties of $Q_2$, we obtain
\[
\aligned
 |f_Q|
&\le |f_Q-f_{2^k Q}|+|f_{2^k Q}| \\
&\le 2 K(Q, 2^k Q)\|f\|_{\mathfrak{A},\rho} + C_0(f) \\
&\le C_f K(Q),
\endaligned
\]
as required.
\end{proof}

\subsection{Estimates of $T1$}

\begin{lemma}\label{l_T1bdd}
Let $\mu$ be a finite positive measure on $\rd$.
Let $T$ be a Calder\'on--Zygmund operator.
Then $T 1\in L^\infty(\mu)$.
\end{lemma}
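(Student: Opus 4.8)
The plan is to bound $T_\er 1(x)=\int_{\rd\setminus Q(x,\er)}\krl(x,y)\,d\mu(y)$ pointwise by a constant depending only on $\mu(\rd)$ and the constants in \eqref{e_cz1}--\eqref{e_cz2}, uniformly in $\er>0$ and in $x\in\mathrm{supp\,}\mu$; this is exactly what ``$T1\in L^\infty(\mu)$'' means in the present setting. Fix such $x$ and $\er$ and put $R=\max(\er,1)$. I would split the integral into a \emph{near} part over $Q(x,R)\setminus Q(x,\er)$ and a \emph{far} part over $\rd\setminus Q(x,R)$; when $\er\ge1$ the near part is over the empty set and only the far part survives.

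The near part is handled immediately by the cancellation hypothesis \eqref{e_cz2} (applied with lower radius $\er$ and upper radius $1$ in the case $\er<1$): it contributes at most the universal constant appearing in \eqref{e_cz2}. Note that neither the regularity estimate \eqref{e_cz3} nor the $n$-dimensionality \eqref{e_ndim} of $\mu$ enters the argument.

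For the far part I would use only the size bound \eqref{e_cz1} together with a dyadic annular decomposition $\rd\setminus Q(x,R)=\bigcup_{j\ge0}A_j$, where $A_j=Q(x,2^{j+1}R)\setminus Q(x,2^jR)$. Since all of these cubes are centered at $x$, for $y\in A_j$ one has $\dist(x,y)\ge 2^{j-1}R$, whence $|\krl(x,y)|\le C(2^{j-1}R)^{-n}$; because $R\ge1$ and $n>0$ this is $\le C2^n$ for every $j\ge 0$. Integrating over $A_j$ and summing over $j$, and using that the $A_j$ are pairwise disjoint, bounds the far part by $C2^n\mu(\rd)<\infty$.

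Adding the two estimates yields $|T_\er 1(x)|\le C(1+\mu(\rd))$ uniformly in $x$ and $\er$, which is the assertion. The one point that calls for a little thought is the far part: for a genuinely $n$-dimensional $\mu$ the function $\dist(x,\cdot)^{-n}$ is typically not $\mu$-integrable over all of $\rd$, so the size condition \eqref{e_cz1} alone does not control the tail. What rescues the argument is the combination of the finiteness of $\mu$ with the fact that the truncation (or the artificial cut at scale $1$) keeps us at distance $\gtrsim R\ge1$ from $x$; this forces all the geometric factors $2^{-jn}$ to be at most $1$, so the annular series is dominated by $\mu(\rd)$.
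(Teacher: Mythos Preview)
Your proof is correct and follows essentially the same approach as the paper: split at a fixed scale, use the cancellation property \eqref{e_cz2} for the near part and the size bound \eqref{e_cz1} together with finiteness of $\mu$ for the far part. The paper's argument is slightly more direct on the far part---it simply observes that $|\krl(x,y)|\le C$ once $\dist(x,y)\ge 1$, so $\int_{\rd\setminus Q(x,2)}|\krl(x,y)|\,d\mu(y)\le C\mu(\rd)$ without any dyadic decomposition---but your annular sum arrives at the same bound and is entirely valid.
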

\begin{proof}
Since $\mu$ is a finite measure, we have
\[
\aligned
|T_\er 1 (x)|
&\le \int_{\rd\setminus Q(x, 2)} \left|\krl(x,y)\right|\, d\mu(y) +
\left| \int_{Q(x, 2)\setminus Q(x,\er)} \krl(x,y)\, d\mu(y)\right| \\
&\le C\mu(\rd) + C \le C
\endaligned
\]
by \eqref{e_cz1} and \eqref{e_cz2}.
\end{proof}

\begin{lemma}\label{l_T1aver}
Let the assumptions of Theorem~\ref{t_main}(ii) hold.
Then
\begin{equation}\label{e_T1_osc}
\frac{1}{\mu(Q)}
\int_Q |T1 - \left<T1\right>_Q|\, d\mu
\le \frac{C}{\KK(Q)}\quad\textrm{for any doubling cube\ } Q
\end{equation}
and
\begin{equation}\label{e_t1_K}
|\left<T1\right>_Q - \left<T1\right>_R|\le C \frac{\KK(Q, R)}{\KK(R)}\quad\textrm{for any two doubling cubes\ } Q\subset R,
\end{equation}
where the constant $C>0$ does not depend on $Q$ and $R$.
\end{lemma}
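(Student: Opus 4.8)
The plan is to deduce \eqref{e_T1_osc} and \eqref{e_t1_K} from the hypotheses \eqref{e_main_osc} and \eqref{e_main_K} by showing that each average $\langle T1\rangle_Q$ differs from $b_Q$ by at most $C/\KK(Q)$. Indeed, for a doubling cube $Q$, \eqref{e_main_osc} gives
\[
|b_Q - \langle T1\rangle_Q| = \left|\frac{1}{\mu(Q)}\int_Q (b_Q - T1)\,d\mu\right| \le \frac{1}{\mu(Q)}\int_Q |T1 - b_Q|\,d\mu \le \frac{C}{\KK(Q)};
\]
the averages are finite since $T1\in L^\infty(\mu)$ by Lemma~\ref{l_T1bdd}. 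Then \eqref{e_T1_osc} is immediate: integrating $|T1-\langle T1\rangle_Q|\le |T1-b_Q|+|b_Q-\langle T1\rangle_Q|$ over $Q$ and dividing by $\mu(Q)$ bounds the left-hand side of \eqref{e_T1_osc} by $\frac{C}{\KK(Q)}+\frac{C}{\KK(Q)}\le\frac{C}{\KK(Q)}$.

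For \eqref{e_t1_K}, take doubling cubes $Q\subset R$ and insert $b_Q$ and $b_R$, using \eqref{e_main_K} for the middle difference:
\[
|\langle T1\rangle_Q - \langle T1\rangle_R| \le |\langle T1\rangle_Q - b_Q| + |b_Q - b_R| + |b_R - \langle T1\rangle_R| \le \frac{C}{\KK(Q)} + C\,\frac{\KK(Q,R)}{\KK(Q)} + \frac{C}{\KK(R)}.
\]
Since $\KK(Q,R)\ge 1$, the first term is at most $C\KK(Q,R)/\KK(Q)$ and the third is at most $C\KK(Q,R)/\KK(R)$, so everything reduces to replacing $\KK(Q)$ in the denominators by $\KK(R)$, i.e., to the purely geometric inequality
\[
\KK(R)\le C\,\KK(Q)\qquad\text{for all cubes } Q\subset R.
\]

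To establish it, fix an integer $c\ge 0$ with $R\subset 2^cQ$ and $\ell(2^cQ)\le 4\ell(R)$ (such $c$ exists since the center of $Q$ lies in $R$, so $R$ is contained in the cube concentric with $Q$ of side-length $2\ell(R)$); then $2^jR\subset 2^{j+c+t_0}Q$ for all $j\ge 0$, with an absolute integer $t_0$. Let $a,b\ge 1$ be the smallest integers with $\mu(2^aQ)>\frac{1}{2}\mu(\rd)$ and $\mu(2^bR)>\frac{1}{2}\mu(\rd)$. The inclusion gives $a\le b+c+t_0$, and, by $n$-dimensionality of $\mu$, $\mu(2^jR)/\ell^n(2^jR)\le C\,\mu(2^iQ)/\ell^n(2^iQ)$ with $i=j+c+t_0$, the constant depending only on $n$ and $t_0$. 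Summing over $1\le j\le b$ and reindexing, $\KK(R)-1\le C\sum_{i=1}^{b+c+t_0}\mu(2^iQ)/\ell^n(2^iQ)$. The part of this sum with $i\le a$ is at most $C\,\KK(Q)$; for $i>a$ one has $\mu(2^iQ)\le\mu(\rd)<2\mu(2^aQ)$ and $\ell(2^iQ)=2^{i-a}\ell(2^aQ)$, so the $i$-th term is at most $2^{1-(i-a)n}\KK(Q)$, and the part with $i>a$ is at most $C\KK(Q)$ because $n>0$. Hence $\KK(R)\le 1+C\KK(Q)\le C\KK(Q)$. The triangle-inequality steps are routine; the delicate point is this last comparison — one must control the tail of the $Q$-sum beyond the scale at which $2^jQ$ already carries half of the total mass, and it is the $n$-dimensionality of $\mu$, hence the genuine geometric decay of those tail terms, that makes the sum finite.
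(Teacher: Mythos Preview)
Your proof is correct and follows essentially the same route as the paper: both arguments first show $|b_Q-\langle T1\rangle_Q|\le C/\KK(Q)$ from \eqref{e_main_osc}, deduce \eqref{e_T1_osc} by the triangle inequality, and then obtain \eqref{e_t1_K} via the chain $\langle T1\rangle_Q\to b_Q\to b_R\to\langle T1\rangle_R$ together with the geometric fact $\KK(R)\le C\,\KK(Q)$ for $Q\subset R$. The only difference is that the paper simply asserts this last inequality as ``clear,'' whereas you supply a full proof of it; your argument for it (containment $2^jR\subset 2^{j+c+t_0}Q$, comparability of side-lengths, and the geometric tail estimate using $\mu(2^iQ)<2\mu(2^aQ)$ for $i>a$) is correct.
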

\begin{proof}
Put $h = T1$.
Firstly, let $Q\subset \rd$ be a doubling cube.
The following standard arguments guarantee that \eqref{e_main_osc} implies \eqref{e_T1_osc}:
\[
\aligned
\int_Q |h - \left<h\right>_Q|\, d\mu
&\le \int_Q |h - b_Q|\, d\mu + \int_Q \left| b_Q - \frac{1}{\mu(Q)} \int_Q h\, d\mu\right|\, d\mu   \\
&\le \frac{C\mu(Q)}{\KK(Q)} + \frac{1}{\mu(Q)} \int_Q \int_Q |h- b_Q| \, d\mu\, d\mu\\
&\le \frac{C\mu(Q)}{\KK(Q)}.
\endaligned
\]
Secondly, for any doubling cube $R\supset Q$, we have
\[
|b_R - \left<h \right>_R| \le \frac{1}{\mu(R)}\int_R |h - b_R|\, d\mu
 \le \frac{C}{\KK(R)} \le \frac{C \KK(Q, R)}{\KK(R)}
\]
by \eqref{e_main_osc}. Clearly, $\KK(R)\le C\KK(Q)$ for $Q\subset R$.
Therefore,
\[
\aligned
|\left<h\right>_Q - \left<h\right>_R|
&\le
|\left<h \right>_Q - b_Q| + |b_Q - b_R| + |b_R - \left<h \right>_R| \\
&\le \frac{C \KK(Q, R)}{\KK(R)}
\endaligned
\]
by \eqref{e_main_K}. The proof of the lemma is finished.
\end{proof}

\section{Main construction}\label{s_cstr}
Let $T$ be a Calder\'{o}n-Zygmund operator bounded on $L^2(\mu)$.
Let $f\in \rbmo(\mu)$, $\rho=2$.
Using Definition~\ref{d_rbmo_A}, for each cube $Q\subset\rd$, select a number $f_{2Q}$ such that \eqref{e_df_osc} and \eqref{e_df_K}
hold with $C_{\mathfrak{A}, \rho} = 2\|f\|_{\mathfrak{A}, \rho}$ and with $2Q$ in the place of $Q$.
In particular, the assumptions of Lemma~\ref{l_A_p_Q} are satisfied.
Also, to explain further arguments and estimates,
it is worth mentioning that $2Q$ is not necessarily doubling even if $Q$ is a doubling cube.

In the present section, we give estimates related to the following functions:
\[
\begin{aligned}
 f_1 = f_{1,Q}
&= f_{2Q}, \\
 f_2 = f_{2,Q}
&= (f- f_{2Q}) \chi_{2Q}, \\
 f_3 = f_{3,Q}
&= (f- f_{2Q}) \chi_{\rd\setminus 2Q}.
\end{aligned}
\]

Observe that
\[
f= f_1 + f_2 + f_3.
\]
This decomposition ascends to \cite{H};
see also \cite{DV}.

Let $b_{2, Q}=0$ and
\[
b_{3, Q} = \frac{1}{\mu(Q)} \int_{Q} Tf_{3,Q} (y)\, d\mu(y).
\]

In the following lemma, we assume that $Q$ is a doubling cube.

\begin{lemma}\label{l_23}
There exists a constant $C>0$ such that
\[
\frac{1}{\mu(Q)} \int_Q |T f_k  - b_{k, Q}|\,d\mu \le C \|f\|, \quad k= 2,3,
\]
for any doubling cube $Q$.
\end{lemma}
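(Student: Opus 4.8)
The plan is to treat the two cases $k=2$ and $k=3$ separately, as the local term $f_2$ and the far term $f_3$ behave quite differently. For $k=2$, the quantity $b_{2,Q}=0$, so I need to bound $\frac{1}{\mu(Q)}\int_Q |Tf_2|\,d\mu$. Since $Q$ is doubling, $\mu(2Q)\le\beta_n\mu(Q)$, so $\mu(Q)\approx\mu(2Q)$. By Cauchy--Schwarz and the $L^2(\mu)$-boundedness of $T$ (applied to the truncations $T_\er$ uniformly in $\er$),
\[
\frac{1}{\mu(Q)}\int_Q |Tf_2|\,d\mu
\le \left(\frac{1}{\mu(Q)}\int_Q |Tf_2|^2\,d\mu\right)^{1/2}
\le \frac{C}{\mu(Q)^{1/2}}\,\|f_2\|_{L^2(\mu)}
= C\left(\frac{1}{\mu(Q)}\int_{2Q}|f-f_{2Q}|^2\,d\mu\right)^{1/2}.
\]
Now $f_{2Q}$ was chosen exactly so that the hypotheses of Lemma~\ref{l_A_p_Q} hold (with $\rho=2$ and $2Q$ in place of $Q$), and since $\mu(2\cdot 2Q)\ge\mu(2Q)$ the right-hand side is, up to a constant depending on $\rho$, at most $C\|f\|$ by Lemma~\ref{l_A_p_Q} with $p=2$. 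This disposes of the case $k=2$.

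For $k=3$, recall $f_3=(f-f_{2Q})\chi_{\rd\setminus 2Q}$ and $b_{3,Q}=\langle Tf_3\rangle_Q$. The key point is that for $x\in Q$ the integral defining $Tf_3(x)$ involves only $y\notin 2Q$, hence $\dist(x,y)\gtrsim\ell(Q)$ and $T$ acts through the kernel without truncation issues. I would estimate, for $x,x'\in Q$,
\[
|Tf_3(x)-Tf_3(x')|
\le\int_{\rd\setminus 2Q}|\krl(x,y)-\krl(x',y)|\,|f(y)-f_{2Q}|\,d\mu(y),
\]
and split $\rd\setminus 2Q=\bigcup_{j\ge1}(2^{j+1}Q\setminus 2^jQ)$. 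On each annulus, the Hölder regularity \eqref{e_cz3} gives $|\krl(x,y)-\krl(x',y)|\le C\ell(Q)^\delta(2^j\ell(Q))^{-n-\delta}$, while $\int_{2^{j+1}Q}|f-f_{2Q}|\,d\mu\le\int_{2^{j+1}Q}|f-f_{2^{j+1}Q}|\,d\mu+\mu(2^{j+1}Q)|f_{2^{j+1}Q}-f_{2Q}|$; the first piece is $\le C\|f\|\,\mu(2^{j+2}Q)$ by \eqref{e_A_osc}, and the second is controlled by $\mu(2^{j+1}Q)\cdot C\|f\|\,K(2Q,2^{j+1}Q)\le C\|f\|\,\mu(2^{j+1}Q)\cdot Cj$ using \eqref{e_A_K} and the logarithmic bound on $K$. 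Together with $\mu(2^{j+2}Q)\le C(2^j\ell(Q))^n$ from $n$-dimensionality, each term contributes $C\|f\|\,j\,2^{-j\delta}$, and summing over $j\ge1$ yields $|Tf_3(x)-Tf_3(x')|\le C\|f\|$ for all $x,x'\in Q$. Averaging $x'$ over $Q$ against $d\mu/\mu(Q)$ gives $|Tf_3(x)-b_{3,Q}|\le C\|f\|$ pointwise on $Q$, which is far stronger than the claimed $L^1$ bound.

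I expect the case $k=3$ to be the main obstacle, specifically the bookkeeping in the annular decomposition: one must simultaneously use the kernel smoothness \eqref{e_cz3}, the oscillation estimate \eqref{e_A_osc}, the chaining estimate \eqref{e_A_K}, the logarithmic growth of $K(2Q,2^{j+1}Q)$ in $j$, and the $n$-dimensional upper bound on $\mu$, and check that the resulting series $\sum_j j\,2^{-j\delta}$ converges (which it does since $\delta>0$). A minor technical point is that $f_{2Q}$ is defined relative to the cube $2Q$, so when comparing with $f_{2^{j+1}Q}$ one should pass through $f_{2\cdot 2^{j}Q}$; since $2Q\subset 2^{j+1}Q$ this is legitimate and only affects constants. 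Everything here is uniform in the truncation parameter $\er>0$ because the $L^2$-bound is assumed uniform and, for $k=3$ with $x\in Q$, the truncation at scale $\er$ is inactive once $\er$ is small compared to $\ell(Q)$, while for larger $\er$ one simply has fewer terms in the integral.
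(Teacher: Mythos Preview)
Your proposal is correct and follows essentially the same approach as the paper: Cauchy--Schwarz plus $L^2$-boundedness together with Lemma~\ref{l_A_p_Q} for $k=2$, and kernel regularity \eqref{e_cz3} with the annular decomposition $\rd\setminus 2Q=\bigcup_{j\ge 1}(2^{j+1}Q\setminus 2^{j}Q)$ for $k=3$ (the paper controls $\sum_{j}2^{-j\delta}K(2Q,2^{j+1}Q)$ by expanding the definition of $K$ and interchanging the sums, while your cruder bound $K(2Q,2^{j+1}Q)\le Cj$ is simpler and equally sufficient). One small slip: in this paper ``doubling'' means $\mu(4Q)<\beta_n\mu(Q)$, not $\mu(2Q)\le\beta_n\mu(Q)$; the former is precisely what is needed to pass from $\mu(Q)^{-1}$ to $\mu(4Q)^{-1}$ before applying Lemma~\ref{l_A_p_Q} with $\rho=2$ and $2Q$ in place of $Q$.
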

\begin{proof}
Put
\[
I_k = \frac{1}{\mu(Q)} \int_Q |T f_k  - b_{k, Q}|\, d\mu, \quad k= 2,3.
\]
For $k=2$, we have
\[
I_2 \le \left(\frac{1}{\mu(Q)} \int_Q |T f_2|^2\, d\mu \right)^{\frac{1}{2}}
\]
by H\"older's inequality.
Since $T$ is bounded on $L^2(\mu)$, the definition of $f_2$
guarantees that
\[
\begin{aligned}
I_2 \le C \left(\frac{1}{\mu(Q)} \int_{2Q} |f - f_{2Q}|^2\, d\mu \right)^{\frac{1}{2}}.
\end{aligned}
\]
Using the doubling property of $Q$ and Lemma~\ref{l_A_p_Q} with $\rho=p=2$ and $2Q$ in the place of $Q$,
we obtain
\[
\aligned
I_2
&\le C \left(\frac{\mu(4Q)}{\mu(Q)}\right)^\frac{1}{2}
\left(\frac{1}{\mu(4Q)} \int_{2Q} |f - f_{2Q}|^2\, d\mu \right)^\frac{1}{2} \\
&\le C\|f\|,
\endaligned
\]
as required for $k=2$.

For $k=3$, we have
\[
\begin{aligned}
I_3
&= \frac{1}{\mu(Q)} \int_{Q} \left| \frac{1}{\mu(Q)} \int_Q (T f_3 (x) - Tf_3 (y))\, d\mu(y)\right| d\mu(x) \\
&\le
\frac{1}{\mu(Q)} \frac{1}{\mu(Q)} \int_Q \int_{Q}\int_{\rd\setminus 2Q} |\krl(x, u) - \krl(y, u)| |f(u) - f_{2Q}|
\, d\mu(u) \, d\mu(y)\, d\mu(x).
\end{aligned}
\]
For $x, y \in Q$ and $u\in \rd\setminus 2Q$, the defining property of $\krl(\cdot, \cdot)$ guarantees that
\[
|\krl(x, u) - \krl(y, u)| \le C\frac{\dist^{\delta}(x, y)}{\dist^{n+\delta}(x, u)}.
\]
Therefore,
\[
\begin{aligned}
I_3
&\le \frac{C}{\mu(Q)} \int_{Q}\int_{\rd\setminus 2Q} \frac{\ell^\delta}{\dist^{n+\delta}(x, u)} |f(u) - f_{2Q}|
\, d\mu(u) \, d\mu(x) \\
&\le \frac{C}{\mu(Q)} \int_{Q} \sum_{k=1}^\infty
\frac{\ell^\delta}{\dist^{n+\delta}(x, u)} \int_{2^{k+1}Q \setminus 2^k Q}|f(u) - f_{2Q}|
\, d\mu(u) \, d\mu(x) \\
&\le
C \sum_{k=1}^\infty
\frac{\ell^\delta}{(2^{k-1}\ell)^{n+\delta}} \int_{2^{k+1}Q \setminus 2^k Q}|f(u) - f_{2Q}|
\, d\mu(u)\\
&\le
C \sum_{k=1}^\infty \frac{\ell^\delta}{(2^{k}\ell)^{n+\delta}}
\left( \int_{2^{k+1}Q}|f(u) - f_{2^{k+1}Q}|\, d\mu(u)
+ \mu(2^{k+1}Q)|f_{2Q} - f_{2^{k+1}Q}|
\right)
\end{aligned}
\]
by the triangle inequality.
The choice of the constants $f_{2Q}$ and $f_{2^{k+1}Q}$ guarantees that
\[
I_3
\le
C \sum_{k=1}^\infty \frac{\ell^\delta}{(2^{k}\ell)^{n+\delta}}
\left( \mu(2^{k+2}Q) + \mu(2^{k+1}Q) \KK(2Q, 2^{k+1}Q)
\right) \|f\|_{\mathfrak{A},2}.
\]
Since $\mu$ is $n$-dimensional, we have
\[
\begin{aligned}
I_3
&\le C \|f\|_{\mathfrak{A},2} \sum_{k=1}^\infty \frac{\ell^\delta(2^{k+2}\ell)^{n}}{(2^{k}\ell)^{n+\delta}}
\KK(2Q, 2^{k+1}Q) \\
&\le C \|f\|_{\mathfrak{A},2} \sum_{k=1}^\infty \frac{\KK(2Q, 2^{k+1}Q)}{2^{k\delta}} \\
&\le C \|f\|_{\mathfrak{A},2}
 \left( 1+ \sum_{k=1}^\infty 2^{-k\delta} \sum_{j=2}^{k+1} \frac{\mu(2^j Q)}{(\ell 2^j)^n} \right) \\
&\le C \|f\|
\end{aligned}
\]
by Lemma~\ref{l_AE_equiv}.
The proof of the lemma is finished.
\end{proof}

In the following lemma, the cubes under consideration are not assumed to be doubling.
However, in the proof of Theorem~\ref{t_main}, we apply this lemma in the doubling setting.

\begin{lemma}\label{l_23_K}
There exists a constant $C>0$ such that
\[
|b_{k, Q} - b_{k, R}| \le C \|f\| \KK(Q, R), \quad k= 2, 3.
\]
for any two
cubes $Q\subset R$.
\end{lemma}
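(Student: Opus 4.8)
For $k=2$ we have $b_{2,Q}=b_{2,R}=0$, so the inequality is trivial; the entire content of the lemma is the case $k=3$. The plan is to estimate $|b_{3,Q}-b_{3,R}|$ by inserting a common reference value and splitting the error into a ``tail difference'' part and a ``near'' part governed by the coefficient $\KK(Q,R)$.

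\medskip

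\emph{Step 1: reduce to a comparison of tails.} Recall $b_{3,Q}=\frac{1}{\mu(Q)}\int_Q Tf_{3,Q}\,d\mu$ with $f_{3,Q}=(f-f_{2Q})\chi_{\rd\setminus 2Q}$, and similarly for $R$. Write $f_{3,Q}-f_{3,R}=(f-f_{2Q})\chi_{2R\setminus 2Q}+(f_{2R}-f_{2Q})\chi_{\rd\setminus 2R}$. Accordingly I would split
\[
b_{3,Q}-b_{3,R}=\big(b_{3,Q}-c_{3,Q}\big)+\big(c_{3,Q}-c_{3,R}\big)+\big(c_{3,R}-b_{3,R}\big),
\]
where $c_{3,Q}=\frac{1}{\mu(Q)}\int_Q Tf_{3,R}\,d\mu$ is the average over $Q$ of the \emph{same} function $T f_{3,R}$ that defines $b_{3,R}$, so that $c_{3,Q}-c_{3,R}=\frac{1}{\mu(Q)}\int_Q Tf_{3,R}-\frac{1}{\mu(R)}\int_R Tf_{3,R}$ is an oscillation of a fixed function whose defining integral is supported outside $2R$; and $b_{3,Q}-c_{3,Q}=\frac{1}{\mu(Q)}\int_Q T(f_{3,Q}-f_{3,R})\,d\mu$ involves only $f_{3,Q}-f_{3,R}$, whose ``far'' part $2R\setminus 2Q$ is at bounded distance from $Q$ and whose constant part $(f_{2R}-f_{2Q})\chi_{\rd\setminus 2R}$ is controlled via $|f_{2Q}-f_{2R}|\le C\|f\|\KK(2Q,2R)\le C\|f\|\KK(Q,R)$ by \eqref{e_A_K} (and the obvious comparability $\KK(2Q,2R)\le C\KK(Q,R)$).

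\medskip

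\emph{Step 2: estimate each piece by the kernel smoothness, exactly as in Lemma~\ref{l_23}.} For $b_{3,Q}-c_{3,Q}$, the contribution of $(f-f_{2Q})\chi_{2R\setminus 2Q}$ is handled by the annular decomposition $2^{k+1}Q\setminus 2^kQ$ used in the proof of Lemma~\ref{l_23}, with the sum now truncated at the scale of $R$; triangle inequality through the constants $f_{2^{k+1}Q}$, together with \eqref{e_A_osc}, \eqref{e_A_K} and $n$-dimensionality, yields a bound $C\|f\|\sum_{k}2^{-k\delta}\KK(2Q,2^{k+1}Q)\le C\|f\|\,\KK(Q,R)$ — here one keeps the full sum rather than the geometric decay, because $\sum_{j\le N_{Q,R}}\mu(2^jQ)/\ell^n(2^jQ)$ is precisely what $\KK(Q,R)$ controls. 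The contribution of the constant term $c:=f_{2R}-f_{2Q}$ is $\frac{|c|}{\mu(Q)}\int_Q|T(\chi_{\rd\setminus 2R})|$, which by \eqref{e_cz1}, \eqref{e_cz2} (the cancellation giving $|T\chi_{\rd\setminus 2R}(x)|\le C$ on $Q$, as in Lemma~\ref{l_T1bdd}) is $\le C|c|\le C\|f\|\KK(Q,R)$. For $c_{3,Q}-c_{3,R}$, write it as a double average $\frac{1}{\mu(Q)\mu(R)}\int_Q\int_R(Tf_{3,R}(x)-Tf_{3,R}(y))\,d\mu(y)d\mu(x)$; since $x,y\in R$ and the defining integral of $Tf_{3,R}$ runs over $u\in\rd\setminus 2R$, the kernel-smoothness bound \eqref{e_cz3} applies with $\dist(x,y)\le C\ell(R)$, and the same annular sum over $2^{k+1}R\setminus 2^kR$ (plus the constant-term estimate via $|f_{2R}|$, or simply absorbing it since the far-field integrand already has the right decay) gives $\le C\|f\|$, which is of course $\le C\|f\|\KK(Q,R)$.

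\medskip

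\emph{Main obstacle.} The routine parts are the kernel estimates; the point requiring care is bookkeeping the two constants $f_{2Q}$ and $f_{2R}$ correctly so that every occurrence of $|f_{2Q}-f_{2^{j}Q}|$, $|f_{2R}-f_{2^jR}|$, or $|f_{2Q}-f_{2R}|$ is matched against the appropriate $\KK$-coefficient and the telescoped sum collapses into $\KK(Q,R)$ (and not something larger like $\KK(Q,R)^2$). In particular, one must use that $\KK$ satisfies $\KK(Q,R)\le\KK(Q,S)+\KK(S,R)$-type and monotonicity relations, together with $\KK(2^jQ,2^kQ)\le\KK(Q,2^kQ)\le C\KK(Q,R)$ for $2^kQ$ of size comparable to $R$ — these are exactly the structural properties of $\KK$ recorded in \cite{T}. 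Once the decomposition of Step~1 is fixed, the rest is a repetition of the computation in Lemma~\ref{l_23}.
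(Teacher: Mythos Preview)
Your decomposition is sound and close in spirit to the paper's (the paper routes through an intermediate \emph{cube} $Q_0=2^kQ\supset R$ rather than through an intermediate average $c_{3,Q}$, but the pieces that arise are the same). The estimate of the far piece $c_{3,Q}-c_{3,R}$ via kernel smoothness, and of the constant piece $(f_{2R}-f_{2Q})\chi_{\rd\setminus 2R}$ via the cancellation bound on $T\chi_{\rd\setminus 2R}$, are both correct.

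There is, however, a genuine gap in your treatment of the near piece
\[
\frac{1}{\mu(Q)}\int_Q T\bigl((f-f_{2Q})\chi_{2R\setminus 2Q}\bigr)\,d\mu .
\]
You invoke ``the annular decomposition used in the proof of Lemma~\ref{l_23}'' and claim a bound $C\|f\|\sum_k 2^{-k\delta}\KK(2Q,2^{k+1}Q)$. But in Lemma~\ref{l_23} the factor $2^{-k\delta}$ arises because $I_3$ is a \emph{double} average of $Tf_3(x)-Tf_3(y)$, so the smoothness estimate~\eqref{e_cz3} applies. Your $b_{3,Q}-c_{3,Q}$ is a \emph{single} average over $x\in Q$; there is no second point to difference against, hence only the size bound~\eqref{e_cz1} is available, and the $2^{-k\delta}$ is not there. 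With the size bound alone the annular sum becomes, after the triangle inequality through $f_{2^{k+1}Q}$,
\[
\sum_{k=1}^{N_{Q,R}}\frac{\mu(2^{k+1}Q)}{\ell^n(2^{k+1}Q)}\,\KK(2Q,2^{k+1}Q)\,\|f\|,
\]
and in the regime where $\mu(2^jQ)\asymp \ell^n(2^jQ)$ this is of order $\KK(Q,R)^2$, not $\KK(Q,R)$.

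The missing idea is exactly what the paper supplies in its term $D_2$: after isolating the constant differences $(f_{2^{j+1}Q}-f_{2Q})$, apply \emph{summation by parts} to rewrite the sum as a boundary term plus a telescoped sum of $(f_{2^{j+2}Q}-f_{2^{j+1}Q})\int_{2^{j+1}Q\setminus 2Q}\krl(y,\cdot)\,d\mu$, and then use the cancellation hypothesis~\eqref{e_cz2} to bound each kernel integral by a constant. This converts the sum $\sum_j \KK(2^{j+1}Q,2^{j+2}Q)$ into $C\KK(Q,R)$ and is the step that rescues the estimate from $\KK(Q,R)^2$. Your ``Main obstacle'' paragraph flags bookkeeping of $\KK$-coefficients but does not mention either summation by parts or the use of~\eqref{e_cz2} for this term; without them the argument as written does not close.
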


\begin{proof}
For $k=2$, the required estimate is trivial; so assume that $k=3$.
Given cubes $Q\subset R$, we have
\[
|b_{3,Q}-b_{3,R}|\leq |b_{3,Q}-b_{3,Q_0}| + |b_{3,Q_0}-b_{3,R}| = I + J,
\]
where $Q_0=2^kQ$ and $k$ is the minimal integer such that $2^kQ\supset R$.

Observe that
\[
\aligned
I
&=\left|\frac{1}{\mu(Q_0)} \int_{Q_0} \frac{1}{\mu(Q)} \int_{Q} Tf_{3,Q} (y)\, d\mu(y) \, d\mu(z)\right.\\
&\qquad\qquad\qquad \left.- \frac{1}{\mu(Q)} \int_{Q} \frac{1}{\mu(Q_0)} \int_{Q_0} Tf_{3, Q_0} (z)\, d\mu(z) \, d\mu(y)\right| \\
&=\left|  \frac{1}{\mu(Q)} \frac{1}{\mu(Q_0)}\int_{Q}\int_{Q_0} Tf_3 (y) - Tf_3 (z)
\, d\mu(z) \, d\mu(y)\right|.
\endaligned
\]
By the definitions of $f_{3, Q}$ and $f_{3, Q_0}$,
\[
\aligned
Tf_{3, Q} (y)
&- Tf_{3, Q_0} (z)
=\int_{\rd\setminus 2Q} K(y,\cdot)(f-f_{2Q} ) d\mu- \int_ {\rd\setminus 2 Q_0}  K(z,\cdot) (f-f_{2 Q_0}) d\mu \\
&=\int_{2 Q_0\setminus 2 Q} K(y,\cdot)(f-f_{2 Q} )\, d\mu
+ \int_ {\rd\setminus 2 Q_0}  K(y,\cdot) (f_{2 Q_0}-f_{2 Q})\, d\mu \\
&\qquad\qquad\qquad + \int_ {\rd\setminus 2 Q_0}  (K(y,\cdot)-K(z,\cdot)) (f-f_{2 Q_0})\, d\mu \\
&= D + E + F.
\endaligned
\]
We split $D$ into dyadic sums as follows:
\[
\aligned
D
&=\sum_{j=1}^{k} \int_{2^{j+1}Q\setminus 2^j Q} K(y,\cdot)(f-f_{2 Q})\, d\mu \\
&=\sum_{j=1}^{k}\int_{2^{j+1}Q\setminus 2^j Q} K(y,\cdot)(f-f_{2^{j+1} Q})\, d\mu \\
&\qquad\qquad\qquad +\sum_{j=1}^{k}(f_{2^{j+1} Q}-f_{2 Q}) \int_{2^{j+1}Q\setminus 2^j Q} K(y,\cdot)\, d\mu  \\
&= D_1+D_2.
\endaligned
\]
To estimate $D_1$, we apply \eqref{e_cz1} and obtain
\[
|K(y,x)|\le \frac{C}{\dist^n(x, y)}\le C\frac{1}{(\ell(Q) 2^{j+2})^n}\quad
\textrm{for}\ y\in Q,\ x\in 2^{j+1}Q \setminus 2^j Q.
\]
Hence,
\begin{equation}\label{e_D1}
\aligned
|D_1|
&\le C\sum_{j=1}^{k}
\frac{\mu(2^{j+2}Q)}{(\ell(Q) 2^{j+2})^n}
\frac{1}{\mu(2^{j+2}Q)} \int_{2^{j+1}Q\setminus 2^j Q} |f-f_{2^{j+1} Q}|\, d\mu \\
&\le C K(Q, Q_0) \|f\|_{\mathfrak{A},2} \\
&\le C K(Q, Q_0) \|f\|
\endaligned
\end{equation}
by Lemma~\ref{l_AE_equiv}.
Below we repeatedly use the equivalence of $\|\cdot\|_{\mathfrak{A},2}$ and $\|\cdot\|$ without
explicit reference to Lemma~\ref{l_AE_equiv}.

Next, applying summation by parts, we obtain
\begin{align*}
D_2=
(f_{2^{k+1}Q} - f_{2Q})
&\int_{2^{k+1}Q\setminus 2Q} K(y,\cdot)\, d\mu  \\
&- \sum_{j=1}^{k-1} (f_{2^{j+2} Q}-f_{2^{j+1} Q})
 \int_{2^{j+1}Q\setminus 2Q} K(y,\cdot)\, d\mu.
\end{align*}
By the cancellation property \eqref{e_cz2},
\[
\left|\int_{2^{j+1}Q\setminus 2Q} K(y,\cdot)\, d\mu \right|\le C, \quad j=1, 2,\dots, k.
\]
Thus, the choice of the numbers $f_{2^{j+1}Q}$, $j=0, 1, \dots, k$, guarantees that
 \begin{equation}\label{e_D2}
\aligned
|D_2|
&\le C K(2Q, 2Q_0) \|f\|_{\mathfrak{A},2}
+  C
\sum_{j=1}^{k-1} K(2^{j+1} Q,2^{j+2} Q) \|f\|_{\mathfrak{A},2} \\
&\le C K(Q, Q_0)\|f\|_{\mathfrak{A},2}
+ C K(2Q, 2Q_0)\|f\|_{\mathfrak{A},2}\\
&\le C K(Q, Q_0)\|f\|.
\endaligned
\end{equation}

To estimate $E$, we also use the cancellation property \eqref{e_cz2} and
we obtain
 \begin{equation}\label{e_E}
\aligned
|E|
&=\left| (f_{2 Q}-f_{2 Q_0}) \int_ {\rd\setminus 2 Q_0}  K(y,\cdot)\, d\mu \right| \\
&\le C |f_{2 Q}-f_{2 Q_0}| \\
&\le C K(Q, Q_0)\|f\|_{\mathfrak{A},2} \\
&\le C K(Q, Q_0)\|f\|.
\endaligned
\end{equation}

Now, consider $F$. We have
 \[
 |K(y,x)-K(z,x) | \le C \frac{\ell^{\delta}(Q_0)} {d^{n+\delta}(x, Q_0)}
 \]
for $x \in \rd \setminus 2 Q_0$.

Therefore,
\[
\aligned
|F|
&\le C \ell^{\delta}(Q_0) \int_{\rd \setminus 2 Q_0}   \frac{|f-f_{2 Q_0}|} {d^{n+\delta}(x, Q_0)}\, d\mu (x) \\
&\le C \sum_{j=1}^\infty \frac{\ell^{\delta}(Q_0)}{(2^{j+2} \ell(Q_0))^{n+\delta}}\int_{2^{j+1}
Q_0 \setminus 2^j Q_0} |f-f_{2 Q_0}|\, d\mu \\
&\le C \sum_{j=1}^\infty \frac{\ell^{\delta}(Q_0)}{(2^{j+2} \ell(Q_0))^{n+\delta}}\left( \int_{2^{j+1}
    Q_0 \setminus 2^j Q_0} |f-f_{ 2^{j+1} Q_0}|\, d\mu \right.\\
&\qquad\qquad\qquad\qquad\qquad\qquad
   \left. +|f_{ 2^{j+1} Q_0}-f_{2 Q_0}|\int_{2^{j+1} Q_0 \setminus 2^j Q_0}\,  d\mu\right)= F_{1}+F_{2}.
 \endaligned
 \]
Firstly,
 \begin{equation}\label{e_F11}
\aligned
F_{1}
&\leq \sum_{j=1}^\infty
\frac{\ell^{\delta}(Q_0) \mu(2^{j+2} Q_0)}{(2^{j+2}
\ell(Q_0))^{n+\delta}}\|f\|_{\mathfrak{A},2} \\
&\le C \sum_{j=1}^\infty \frac{\|f\|_{\mathfrak{A},2}}{(2^j )^{\delta}} \\
&\le C \|f\|.
\endaligned
\end{equation}
Secondly,
 \[
\aligned
F_{2}
&\leq \sum_{j=1}^\infty
\frac{\ell^{\delta}(Q_0) \mu(2^{j+2} Q_0)}{(2^{j+2}  \ell(Q_0))^{n+\delta}}
\|f\|_{\mathfrak{A},2} K(2 Q_0,2^j Q_0) \\
&\le C \sum_{j=1}^\infty \frac{\|f\|_{\mathfrak{A},2} K(2 Q_0,2^j Q_0)} {(2^j )^{\delta}} .
\endaligned
\]
Since $\mu$ is $n$-dimensional, we have $K(2 Q_0,2^j Q_0) \le C j$ with a universal constant $C>0$. Thus,
\begin{equation}\label{e_F12}
F_{2}\le C \|f\|_{\mathfrak{A},2}\sum_{j=1}^\infty\frac{j}{(2^j )^{\delta}}
\le C \|f\|.
\end{equation}

Combining (\ref{e_D1}--\ref{e_F12}) and integrating with respect to $z$ and $y$,
we obtain the required estimate for $I = |b_{3,Q}-b_{3,Q_0}|$.
Therefore, it remains to estimate $J$.

We have
\[
\aligned
J
&=|b_{3,Q_0}-b_{3,R}|  \\
&=\left|\frac{1}{\mu(R)} \int_{R} \frac{1}{\mu(Q_0)} \int_{Q_0} Tf_{3,Q_0} (y)\, d\mu(y) \, d\mu(w)\right.\\
&\qquad\qquad\qquad \left.- \frac{1}{\mu(Q_0)} \int_{Q_0} \frac{1}{\mu(R)} \int_{R} Tf_{3, R} (w)\, d\mu(w) \, d\mu(y)\right| \\
&=\left|  \frac{1}{\mu(Q_0)} \frac{1}{\mu(R)}\int_{Q_0}\int_{R} Tf_{3,Q_0} (y) - Tf_{3,R} (w)
\, d\mu(w) \, d\mu(y)\right|.
\endaligned
\]

By the definitions of $f_{3,Q_0}$ and $f_{3,R}$,
\[
\aligned
Tf_{3, Q_0} (z)
&- Tf_{3, R} (w)
\!=\int_{\rd\setminus 2Q_0} K(z,\cdot)(f-f_{2Q_0} ) d\mu -
\int_{\rd\setminus 2 R}  K(w,\cdot) (f-f_{2 R}) d\mu \\
&=-\int_{2 Q_0\setminus 2 R} K(w,\cdot)(f-f_{2 R} )\, d\mu
+ (f_{2 Q_0}-f_{2 R}) \int_ {\rd\setminus 2 Q_0}  K(w,\cdot)\, d\mu \\
&\qquad\qquad\qquad + \int_ {\rd\setminus 2 Q_0}  (K(z,\cdot)-K(w,\cdot)) (f-f_{2 Q_0})\, d\mu \\
&=\mathcal{D}+ \mathcal{E} + \mathcal{F}.
\endaligned
\]

Firstly, $\mathcal{D}$ is estimated similarly to $D$ and even simpler:
\[
\aligned
\mathcal{D}
&=-\int_{2 Q_0\setminus 2 R} K(w,\cdot)(f-f_{2 Q_0} )\, d\mu
+ (f_{2 R} -f_{2 Q_0})\int_{2 Q_0\setminus 2 R} K(w,\cdot)\, d\mu
\\
&= \mathcal{D}_1 + \mathcal{D}_2.
\endaligned
\]
Since $\ell (R)$ and $\ell (Q_0)$ are comparable and $\mu$ is $n$-dimensional, we have
\begin{equation}\label{e_calD1}
\aligned
|\mathcal{D}_1|
&\le
\int_{2Q_0\setminus 2R}\frac{C}{\dist^n(R, 2Q_0\setminus 2R)} |f-f_{2 Q_0}|\, d\mu
\\
&\le
\frac{C}{\ell^n(4 R)}
\frac{\mu(4 Q_0)}{\mu(4Q_0)} \int_{2Q_0} |f-f_{2Q_0}|\, d\mu
\\
&\le C  \frac{\mu(4Q_0)}{\ell^n(4Q_0)}\|f\|_{\mathfrak{A},2} \\
&\le C  \|f\|.
\endaligned
\end{equation}

Using again that
$\ell (R)$ and $\ell (Q_0)$ are comparable and $\mu$ is $n$-dimensional,
we obtain
 \begin{equation}\label{e_calD2}
\aligned
|\mathcal{D}_2|
&\le |f_{2Q_0}-f_{2R}|
\frac{\mu(2Q_0)}{\dist^n(R, 2Q_0\setminus 2R)}
 \\
&\le C |f_{2Q_0}-f_{2R}|
\frac{\mu(2Q_0)}{\ell^n(R)}
\\
&\le C |f_{2Q_0}-f_{2R}| \\
&\le C K(2R, 2Q_0)\|f\|_{\mathfrak{A}, 2} \\
&\le C \|f\|.
\endaligned
\end{equation}

To estimate $\mathcal{E}$, we use the cancellation property \eqref{e_cz2}
and obtain
 \begin{equation}\label{e_calE}
\aligned
|\mathcal{E}|
&=\left| (f_{2 R}-f_{2 Q_0}) \int_ {\rd\setminus 2 Q_0} K(w,\cdot)\, d\mu \right| \\
&\le C |f_{2 R}-f_{2 Q_0}| \\
&\le C K(2R, 2Q_0)\|f\|_{\mathfrak{A}, 2} \\
&\le C \|f\|.
\endaligned
\end{equation}

Now, we estimate $\mathcal{F}$. Property~\eqref{e_cz3} guarantees that
\[
\aligned
|\mathcal{F}|
&= \left|\int_ {\rd\setminus 2 Q_0} (( K(z,\cdot)-K(w,\cdot)) (f-f_{2 Q_0})\, d\mu \right|
\\
&\le C \dist^\de(z,w) \int_ {\rd\setminus 2 Q_0}
\frac{|f(x) - f_{2 Q_0}|}{\dist^{n+\delta}(x, Q_0)}\, d\mu(x).
\endaligned
\]
We have
\[
  \dist^\de(z,w) \le \ell^\de(Q_0).
\]
Hence, using dyadic decompositions and summation by parts,
we repeat the arguments applied to estimate $|F|$ and we
obtain the following analog of \eqref{e_F11}
and \eqref{e_F12}:
\begin{equation}\label{e_calF}
|\mathcal{F}|\le C\|f\|.
\end{equation}

Now, combining estimates (\ref{e_calD1}--\ref{e_calF}), we conclude that
\[
J = |b_{3,Q}-b_{3,R}|\le C K(Q,R)\|f\|
\]
for any two cubes $Q \subset R$.
The proof of Lemma~\ref{l_23_K} is finished.
\end{proof}

\section{Proof of Theorem~\ref{t_main}}\label{s_proof21}

Given an $f\in\rbmo(\mu)$, we have to prove that $Tf \in \rbmo(\mu)$.
Put $g = Tf$. In this section, for every doubling cube $Q\subset\rd$,
we find a constant $g_Q \in\Rbb$ such that
\begin{equation}\label{e_osc_todo}
\frac{1}{\mu(Q)} \int_{Q}|g - g_Q|\, d\mu \le C
\end{equation}
and
\begin{equation}\label{e_K_todo}
|g_Q - g_R| \le C\KK(Q, R)\quad\textrm{for any two doubling cubes\ }  Q\subset R,
\end{equation}
where $C= C_f >0$.

So, given an $f\in\rbmo(\mu)$ and a doubling cube $Q\subset \rd$, we apply the construction
described in Section~\ref{s_cstr} and we obtain
\[f = f_{1, Q} + f_{2, Q} + f_{3, Q} = f_1 + f_2 + f_3.
\]
Also, we have constants $b_{2, Q}$ and $b_{3, Q}$.
By Lemma~\ref{l_T1aver}, properties \eqref{e_T1_osc} and \eqref{e_t1_K} hold for the constants $\left<T1\right>_Q$ and $\left<T1\right>_R$.
Put
\[
g_Q = f_{2Q} \left<T1 \right>_Q + b_{2, Q} + b_{3, Q}.
\]

\subsection{Oscillation condition \eqref{e_osc_todo}}
We have
\[
\begin{aligned}
&\frac{1}{\mu(Q)} \int_{Q}|g - g_Q|\, d\mu \\
&\le \frac{1}{\mu(Q)} \left(\int_{Q}|Tf_1 - f_{2Q}\left<T1 \right>_Q|\, d\mu + \int_{Q}|Tf_2 - b_{2,Q}|\, d\mu +
\int_{Q}|Tf_3 - b_{3,Q}|\, d\mu\right) \\
&\le \frac{1}{\mu(Q)} \int_{Q}|Tf_1 - f_{2Q}\left<T1 \right>_Q|\, d\mu + C \|f\|
\end{aligned}
\]
by Lemma~\ref{l_23}. Recall that $T f_1 = T f_{2Q} = f_{2Q} T 1$.
Therefore,
applying \eqref{e_T1_osc} and Lemma~\ref{l_fQ}, we obtain
\[
\aligned
\frac{1}{\mu(Q)} \int_{Q}|Tf_1 - f_{2Q} \left<T1 \right>_Q|\, d\mu
&=
\frac{|f_{2Q}|}{\mu(Q)}  \int_{Q}|T1 - \left<T1 \right>_Q|\, d\mu \\
&\le
\frac{C_f \KK(2Q)}{\KK(Q)} \le C_f.
\endaligned
\]
Hence, the oscillation condition \eqref{e_osc_todo} is proved.

\subsection{$\KK$-condition \eqref{e_K_todo}}
Let $Q\subset R$ be doubling cubes.
Combining the triangle inequality and Lemma~\ref{l_23_K}, we obtain
\[
\aligned
|g_Q - g_R|
&\le |f_{2Q} \left<T1\right>_Q - f_{2R}\left<T1\right>_R|
 + |b_{3, Q} - b_{3, R}| \\
&\le |f_{2Q} \left<T1\right>_Q - f_{2R}\left<T1\right>_R| + C \|f\|\KK(Q, R).
\endaligned
\]
Next, the choice of the constants $f_{2Q}$ and $f_{2R}$ guarantees that
\[
|f_{2Q} - f_{2R}|\le C\|f\|_{\mathfrak{A}, 2} \KK(2Q, 2R) \le C\|f\|\KK(Q, R).
\]
Also, we have $T1\in L^\infty(\mu)$ by Lemma~\ref{l_T1bdd}, hence $|\left<T1\right>_Q|\le C$. Therefore,
\[
\begin{aligned}
|f_{2Q} \left<T1\right>_Q - f_{2R}\left<T1\right>_{2R}|
&\le |\left<T1\right>_Q| |f_{2Q} - f_{2R}| + |f_{2R}| \left|\left<T1\right>_Q - \left<T1\right>_R \right| \\
&\le  C\|f\|\KK(Q, R) + |f_{2R}| \left|\left<T1\right>_Q - \left<T1\right>_R \right|.
\end{aligned}
\]
Applying Lemma~\ref{l_T1aver} and Lemma~\ref{l_fQ}, we obtain
\[
|f_{2R}| \left|\left<T1\right>_Q - \left<T1\right>_R \right| \le C_f \KK(2R) \frac{\KK(Q, R)}{\KK(R)}
\le C_f \KK(Q, R).
\]
Combining the above estimates, we conclude that \eqref{e_K_todo} holds.
This ends the proof of the theorem.

\bibliographystyle{amsplain}

\end{document}